\newcommand{\R}{\mathbb{R}}
\newcommand{\E}{\mathbb{E}}
\newcommand{\PP}{\mathbb{P}}
\newcommand{\QQ}{\mathbb{Q}}
\newcommand{\ACal}{\mathcal{A}}
\newcommand{\BCal}{\mathcal{B}}
\newcommand{\CCal}{\mathcal{C}}
\newcommand{\DCal}{\mathcal{D}}
\newcommand{\FCal}{\mathcal{F}}
\newcommand{\LCal}{\mathcal{L}}
\newcommand{\MCal}{\mathcal{M}}
\newcommand{\NCal}{\mathcal{N}}
\newcommand{\PCal}{\mathcal{P}}
\newcommand{\RCal}{\mathcal{R}}
\newcommand{\XCal}{\mathcal{X}}
\newcommand{\YCal}{\mathcal{Y}}
\newcommand{\GCal}{\mathcal{G}}
\newcommand{\Mmf}{\mathfrak{M}}
\newtheorem{assumption}{Assumption}
\newtheorem{lemma}{Lemma}
\newtheorem{theorem}{Theorem}
\newtheorem{definition}{Definition}
\newtheorem{corollary}{Corollary}
\newtheorem{remark}{Remark}
\title{Distributionally Time-Varying Online Stochastic Optimization under Polyak-\L{}ojasiewicz Condition with Application in Conditional Value-at-Risk Statistical Learning\footnote{This work is supported in part by the Australian Research Council under the Discovery Project DP210102454 and the Australian Government, via grant AUSMURIB000001 associated with ONR MURI grant N00014-19-1-2571.}}
\newcommand*\samethanks[1][\value{footnote}]{\footnotemark[#1]}
\author{Yuen-Man Pun\thanks{CIICADA Lab, School of Engineering, The Australian National University; Email: \{yuenman.pun, iman.shames\}@anu.edu.au}
\and Farhad Farokhi\thanks{Department of Electrical and Electronic Engineering, The University of Melbourne; Email: ffarokhi@unimelb.edu.au}
\and Iman Shames\samethanks[2]
}
\begin{document}

\maketitle

\begin{abstract}
In this work, we consider a sequence of stochastic optimization problems following a time-varying distribution via the lens of online optimization. Assuming that the loss function satisfies the Polyak-\L{}ojasiewicz condition, we apply online stochastic gradient descent and establish its dynamic regret bound that is composed of cumulative distribution drifts and cumulative gradient biases caused by stochasticity. The distribution metric we adopt here is Wasserstein distance, which is well-defined without the absolute continuity assumption or with a time-varying support set. We also establish a regret bound of online stochastic proximal gradient descent when the objective function is regularized. Moreover, we show that the above framework can be applied to the Conditional Value-at-Risk (CVaR) learning problem. Particularly, we improve an existing proof on the discovery of the PL condition of the CVaR problem, resulting in a regret bound of online stochastic gradient descent.
\end{abstract}

\section{Introduction}
In a stochastic optimization problem, one aims to make a decision by minimizing the expectation of a loss function following an unknown distribution, which can be approximated via sampling. As many problems in real world involve uncertain parameters, stochastic programming has been extensively applied to almost all areas of science and engineering~\cite{SDR21}, such as telecommunication~\cite{Gaivor05}, finance~\cite{TVZ08,ZV14}, and marketing~\cite{Pflug12}, just to name a few. Most works in stochastic programming study scenarios when the underlying distribution is stationary, which, nevertheless, may not apply to problems in dynamic environments. Examples include problems in finance and sociology where the expansion of economy and the evolution of demographics can significantly modify the underlying distributions. Another example is a source localization problem of a substance leakage or mitigating its effect, where the distribution of the substance changes in the space due to movements of the source, diffusion, or changes in the environment.

A na\"{i}ve approach to solving the problem is to find a solution of a worst-case scenario of a set of distributions that contains the whole trajectory of the underlying distribution over time and use tools from the distributionally robust optimization (DRO) to solve it. DRO, which has been proved to be of extreme importance in machine learning~\cite{AEK15,SND17,KESN19,faro20,MRY13}, focuses on finding the solution of a worst-case scenario of a set of distributions (often known as ambiguity set) constructed near the empirical distribution and assumed to contain the true distribution~\cite{CW10,EI06,EK18,BS04}; also see~\cite{PDM16,BDD+13,DY10,GS10,HH13,JG16,PW07,woza12} for different constructions of ambiguity sets. However, the solution in DRO is known to be very conservative, especially when the ambiguity set is large. As the underlying distribution may drift significantly over time and making the ambiguity set large, this approach may not be desirable by applying one solution to all possible distributions in the ambiguity set.

Another approach is to view it as a sequence of stochastic optimization problems following a time-varying distribution over different time steps. This fits into an online optimization framework \cite{MSJ+16,DBT+19,ASD19,HK14}, in which a decision maker makes a series of decision based on the observations at previous rounds. Recently, there have been works that interplay between online optimization and stochastic programming; see, for example,~\cite{SF20,KMD22,CDH23,WBD21,WZ20,CZP21,JLZ20}. However, as far as we concerned, these works mostly consider sequences of convex loss functions, which may not be applicable to applications with nonconvex losses. Moreover, most works quantify the distribution change using the distance between optimal solutions at consecutive time steps, which is less intuitive as it involves the behavior of the loss function.

Motivated by the above discussion, we consider a sequence of expectation loss minimization problems that satisfy the Polyak-\L{}ojasiewicz (PL) condition. This class of functions, albeit not necessarily convex, satisfies certain quadratic growth condition, which is shown to be exhibited in a number of optimization problems~\cite{KNS16,LZB22,Garr23}. We apply the online stochastic gradient descent to solve the problem and adopt the \emph{dynamic regret} to measure its performance, which evaluates the cumulative differences between the generated loss and the optimal loss at every time step~\cite{Zink03,BGZ15,LL19,DMP+21}. We establish a regret bound that makes explicit the dependence of the dynamic regret of online stochastic gradient descent on the cumulative distribution drifts and the gradient bias caused by the stochasticity. While a vast majority of works in online optimization literature bounds the dynamic regret in terms of the cumulative distances between optimal solutions at successive time steps, it is more natural to consider the cumulative distances between underlying distribution at successive time steps in the time-varying distribution setting. The distribution metric we adopt here is Wasserstein distance, which do away with the absolute continuity assumption on distributions at successive time steps, as needed for Kullback-Leibler (KL) divergence~\cite{CT06}. In addition, it is well-defined even when the support set is time-varying. Based on the above development, we further study a sequence of expectation loss minimization problems with a possibly nonsmooth regularizer that satisfies proximal Polyak-\L{}ojasiewicz (proximal PL) condition. We apply the online stochastic proximal gradient descent and show a regret bound that is composed of the cumulative distribution drifts and the gradient bias caused by the stochasticity.

Many applications benefit from the above framework. In particular, we apply it to the Conditional Value-at-Risk (CVaR) statistical learning problem, where the underlying distribution is time-varying. The CVaR problem focuses on making the best worst-case decision by minimizing the expected loss of the $\alpha\cdot 100\%$ worst cases, for $\alpha\in(0,1]$, which leads to a risk-averse solution. Such a solution is of particular interest in areas such as medicine, traffic and finance, when a poor solution can lead to a severe consequence. Based on the recent advances in the discovery of PL condition in the CVaR problem~\cite{Kalo22}, we establish a regret bound of online stochastic gradient descent in a CVaR problem with a time-varying underlying distribution, which, as far as we know, has barely been investigated in the literature. Specifically, we show that the assumption imposed in \cite{Kalo22} for establishing PL condition of a CVaR problem is impossible to achieve at its global optimum. Instead, we find a new non-empty subset that satisfies the PL condition while containing its global optimum. As long as the iterate lies within the subset at every time step, a regret bound of online stochastic gradient descent then follows from the said framework, which expands the repertoire of online robust optimization problems.

\subsection{Related Works}
Over the last two decades, online convex optimization has gained considerable interests in the machine learning community, for its simplicity and efficiency in dealing with large-scale data in real time. While the theory in online optimization is getting more understood, this provides a new tool in studying stochastic optimization with time-varying distribution using techniques from online optimization. For example, \cite{CZP21} studies the dynamic regret bound of online projected stochastic gradient descent when applied to a sequence of convex losses with a bounded convex feasible set. Assuming a prior knowledge on the temporal variations $\tilde{\Delta}(T)$ of the underlying distribution, the work establishes a regret bound $\mathcal{O}(\sqrt{T\tilde{\Delta}(T)})$, where $T$ is the interested time of horizon. Another example is the recent work \cite{CDH23}, which considers the error bounds of online proximal gradient descent when applied to a sequence of strongly convex loss functions, both in expectation and with high probability. The error bounds are shown to be composed of optimization error, gradient noise and time drifts. 

Beyond convexity, researchers have also explored the convergence online algorithms in solving sequences of loss functions satisfying PL condition. An earlier work \cite{ZYY+16} shows a regret bound of online multiple gradient descent with full-gradient information when solving a sequence of loss functions satisfying PL condition (or known as semi-strong convexity in the work), in which a regret bound in terms of cumulative path variations of optimal solutions is established. Recently, the work~\cite{KMD22} studies the online gradient and proximal gradient methods when the loss functions satisfy PL condition and proximal PL condition, respectively. Assuming that the gradient is contaminated by a sub-Weibull noise, the paper shows regret bounds in expectation and with high probability iteration-wise that depend on the variability of the problem and the statistics of the sub-Weibull gradient error.

A vast majority of works in online dynamic optimization capture the distribution drift via the distance between the optimal solutions of a particular loss function at consecutive time steps, which is less intuitive compared with other distribution metrics such as KL divergence and Wasserstein distance. An exception that we have noticed is the work \cite{SF20}, which shows a dynamic regret bound of online stochastic gradient descent that is composed of the cumulative Wasserstein distance between distributions at consecutive time steps when applied to a sequence of strongly convex loss functions. Yet, to the best of our knowledge, assumptions that are weaker than the strong convexity under this setting have not been studied in the literature.

\subsection{Notations}
The notation in the paper is mostly standard. We use $\|\cdot\|_1$ and $\|\cdot\|$ to denote the $\ell_1$-norm and Euclidean norm, respectively. We also use ${\rm proj}_X(\cdot)$ to denote the mapping of projection over a set $X$ and use ${\rm sgn}(\cdot)$ to denote a sign function. Moreover, we use the operator $(\cdot)_+$ to denote the operation $(\cdot)_+ = \max\{\cdot,0\}$.

\section{Online Stochastic Optimization under PL Condition}
\subsection{Problem Formulation}
Given a loss function $\LCal\colon \R^{n_x}\times \R^{n_w} \to\R$, we are interested in solving a sequence of minimization problems
\begin{equation}\label{eq:prob}
    \min_{\bm{x} \in\R^{n_x}} \left[ \FCal_t(\bm{x}) \coloneqq \E_{\bm{w}\sim \PP_t} \LCal(\bm{x},\bm{w}) \right]
\end{equation}
for $t=1,\ldots,T$ and $T$ being the horizon length. Here, $\bm{x}\in\R^{n_x}$ is a decision variable and $\bm{w}\in \R^{n_w}$ is a random parameter following an unknown distribution $\PP_t$ with probability measure $\PCal^t$ on a probability space $\Omega_t \subseteq \R^{n_w}$ at time $t$ for $t=1,\ldots,T$. Suppose that data are revealed in an online manner. Specifically, at each time step $t$, after determining a decision variable $\bm{x}_t\in\R^{n_x}$, a loss $\FCal_t(\bm{x}_t)$ is revealed. We then collect $m$ samples $\{\bm{w}_i^t\}_{i=1}^m$, which are drawn independently from the underlying distribution $\PP_t$, and use them to determine the decision variable $\bm{x}_{t+1}\in\R^{n_x}$ at the next time step. Our goal is to minimize the cumulative loss induced by decisions $\bm{x}_t$ for $t=1,\ldots,T$. This form of online stochastic optimization problem has broad applications in online learning, adaptive signal processing and online resource allocation, where decisions have to be made in real-time and the underlying distribution is unknown and time-varying.

\begin{assumption}[Lipschitzness and Differentiability of the Loss]\label{assum:shapiro}
Let $\bm{x}\in\R^{n_x}$. For $t=1,\ldots,T$, assume that the following holds:
    \begin{itemize}
    \item $\LCal(\bm{x},\cdot)$ is measurable for every $\bm{x}\in\R^{n_x}$; 
        \item $\FCal_t(\bm{x}) = \E_{\bm{w}\sim\PP_t} \LCal(\bm{x},\bm{w})$ is well-defined and finite valued;
        \item There exists a positive valued random variable $C(\bm{w})$ such that $\E_{\bm{w}\sim\PP_t}[C(\bm{w})] < \infty$, and for all $\bm{x}_1,~\bm{x}_2 \in \R^{n_x}$ in a neighborhood of $\bm{x}$ and almost every $\bm{w}\in \Omega_t$, the following inequality holds:
        \[
        |\LCal(\bm{x}_1,\bm{w}) - \LCal(\bm{x}_2,\bm{w})| \le C(\bm{w}) \|\bm{x}_1 - \bm{x}_2 \|;
        \]
        \item For almost every $\bm{w}\in \Omega_t$ the function $\LCal(\cdot,\bm{w})$ is differentiable at $\bm{x}$.
    \end{itemize}
\end{assumption}
\begin{lemma}[Differentiability {\cite[Theorem 7.44]{SDR21}}]
    Let $\bm{x}\in\R^{n_x}$. Under Assumption~\ref{assum:shapiro}, $\FCal_t(\bm{x})$ is Lipschitz continuous in a neighborhood of $\bm{x}$. Moreover, $\FCal_t(\bm{x})$ is differentiable at $\bm{x}$ and
    \[
    \nabla \FCal_t(\bm{x}) = \E_{\bm{w}\sim\PP_t} [\nabla_{\bm x} \LCal(\bm{x},\bm{w})].
    \]
\end{lemma}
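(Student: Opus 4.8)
The plan is to establish the three assertions in order: the local Lipschitz continuity of $\FCal_t$, the integrability of the pointwise gradient $\nabla_{\bm{x}}\LCal(\bm{x},\bm{w})$, and finally the differentiability together with the interchange formula, the last of which is proved by a dominated convergence argument. First I would dispose of the Lipschitz claim. For $\bm{x}_1,\bm{x}_2$ in a neighborhood of $\bm{x}$, the finiteness of $\FCal_t$ from Assumption~\ref{assum:shapiro} makes the difference meaningful, and pulling the absolute value inside the expectation together with the pointwise Lipschitz bound gives
\[
|\FCal_t(\bm{x}_1)-\FCal_t(\bm{x}_2)| \le \E_{\bm{w}\sim\PP_t}|\LCal(\bm{x}_1,\bm{w})-\LCal(\bm{x}_2,\bm{w})| \le \big(\E_{\bm{w}\sim\PP_t}[C(\bm{w})]\big)\,\|\bm{x}_1-\bm{x}_2\|,
\]
so $\FCal_t$ is Lipschitz near $\bm{x}$ with constant $L:=\E_{\bm{w}\sim\PP_t}[C(\bm{w})]<\infty$.

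Next I would record a key integrability fact about the pointwise gradient $\bm{g}(\bm{w}):=\nabla_{\bm{x}}\LCal(\bm{x},\bm{w})$, which by assumption exists for almost every $\bm{w}$. Since $\LCal(\cdot,\bm{w})$ is $C(\bm{w})$-Lipschitz in a neighborhood of $\bm{x}$ and differentiable at $\bm{x}$, its gradient there satisfies $\|\bm{g}(\bm{w})\|\le C(\bm{w})$ for almost every $\bm{w}$. Because $\E_{\bm{w}\sim\PP_t}[C(\bm{w})]<\infty$, the map $\bm{g}$ is integrable, so $\E_{\bm{w}\sim\PP_t}[\bm{g}(\bm{w})]$ is a well-defined finite vector; its measurability follows from $\bm{g}$ being an almost-everywhere pointwise limit of measurable difference quotients of the measurable maps $\LCal(\bm{x},\cdot)$.

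For the differentiability claim, for $\bm{h}\ne 0$ I would introduce the normalized remainder
\[
r(\bm{h},\bm{w}):=\frac{\LCal(\bm{x}+\bm{h},\bm{w})-\LCal(\bm{x},\bm{w})-\langle\bm{g}(\bm{w}),\bm{h}\rangle}{\|\bm{h}\|}.
\]
Differentiability of $\LCal(\cdot,\bm{w})$ at $\bm{x}$ gives $r(\bm{h},\bm{w})\to 0$ as $\bm{h}\to 0$ for almost every $\bm{w}$, while the pointwise Lipschitz bound together with $\|\bm{g}(\bm{w})\|\le C(\bm{w})$ yields the domination $|r(\bm{h},\bm{w})|\le 2C(\bm{w})$ uniformly in $\bm{h}$. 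Taking an arbitrary sequence $\bm{h}_k\to 0$ and invoking the dominated convergence theorem, I obtain
\[
\frac{\FCal_t(\bm{x}+\bm{h}_k)-\FCal_t(\bm{x})-\langle\E_{\bm{w}\sim\PP_t}[\bm{g}(\bm{w})],\bm{h}_k\rangle}{\|\bm{h}_k\|}=\E_{\bm{w}\sim\PP_t}[r(\bm{h}_k,\bm{w})]\longrightarrow 0.
\]
Since the sequence was arbitrary, the limit as $\bm{h}\to 0$ is zero, which is precisely the Fréchet differentiability of $\FCal_t$ at $\bm{x}$ with $\nabla\FCal_t(\bm{x})=\E_{\bm{w}\sim\PP_t}[\nabla_{\bm{x}}\LCal(\bm{x},\bm{w})]$.

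I expect the main obstacle to be the interchange of limit and expectation. The crucial technical points are two: securing an $\bm{h}$-uniform integrable dominating function, which hinges on the fact that the single pointwise Lipschitz constant $C(\bm{w})$ simultaneously controls both the difference quotient and $\|\bm{g}(\bm{w})\|$; and upgrading the sequential conclusion of the dominated convergence theorem to the genuine two-sided limit over the continuum of $\bm{h}$, which is handled cleanly by the arbitrary-sequence device above. Everything else reduces to the linearity and monotonicity of the expectation.
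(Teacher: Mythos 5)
Your proof is correct and is essentially the standard argument behind the cited result: the paper itself offers no proof of this lemma, deferring entirely to \cite[Theorem 7.44]{SDR21}, and your Lipschitz estimate via $\E[C(\bm{w})]$, the bound $\|\nabla_{\bm{x}}\LCal(\bm{x},\bm{w})\|\le C(\bm{w})$, and the dominated-convergence argument on the normalized remainder are exactly the ingredients of that theorem's proof. The only point worth making explicit is that the uniform domination $|r(\bm{h},\bm{w})|\le 2C(\bm{w})$ is claimed for $\|\bm{h}\|$ small enough that $\bm{x}+\bm{h}$ stays in the neighborhood from Assumption~\ref{assum:shapiro}, which suffices for the limit $\bm{h}\to 0$.
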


Assume that $\FCal_t(\bm{x}) = \E_{\bm{w}\sim \PP_t} \LCal(\bm{x},\bm{w})$ is continuously differentiable and $\LCal(\bm{x},\bm{w})$ is differentiable with respect to (wrt) $\bm{x}$ $\PCal^t$-almost everywhere for $t=1,\ldots,T$. Here, $\LCal(\bm{x},\bm{w})$ is not necessarily differentiable everywhere, so a large class of loss functions can be included under this framework, for example, $\LCal(\bm{x},\bm{w}) = \bm{1}_{C(\bm{x})}(\bm{w})$ with some convex set $C(\bm{x})$. For $t=1,\ldots,T-1$, we update an estimate at time $t+1$ via one-step stochastic gradient descent with step size $\gamma_t>0$:
\begin{equation}\label{eq:sgd}
    \bm{x}_{t+1} =  \bm{x}_t -  \gamma_t \widehat{\nabla}\FCal_t(\bm{x}_t;\bm{w}_1^t,\ldots,\bm{w}_m^t),
\end{equation}
where $\widehat{\nabla}\FCal_t(\bm{x};\bm{w}_1^t,\ldots,\bm{w}_m^t) \approx \nabla \FCal_t(\bm{x})$ is some gradient approximation with $\E[\widehat{\nabla}\FCal_t(\bm{x};\bm{w}_1^t,\ldots,\bm{w}_m^t)] = \nabla \FCal_t(\bm{x})$. Different gradient approximations can be made in different contexts --- usually taking the average over a set of sampled gradients. However, in our setting, it is possible that given any $\bm{w} \in \Omega_t$, $\LCal(\bm{x},\bm{w})$ is non-differentiable at some $\bm{x}$, for $t=1,\ldots,T$. Hence, to make our statements precise, we introduce the following assumptions and definitions.

\begin{assumption}[Bounded Support Set]\label{assum:bdd-support}
    Every underlying distribution $\PP_t$ has a bounded support set $\Omega_t$, for $t=1,\ldots,T$.
\end{assumption}
 Under Assumption~\ref{assum:bdd-support}, we define the Clarke subdifferential of $\LCal$ wrt $\bm{x}$~\cite{LSM20}:
\[
\partial_{C,\bm{x}}  \LCal(\bm{x},\bm{w}) = \left\{\bm{s}\in \R^{n_x} \colon \bm{s}^T \bm{d} \le \limsup_{\bm{x}'\to\bm{x},t\searrow 0} \frac{\LCal(\bm{x}'+t\bm{d},\bm{w}) - \LCal(\bm{x}',\bm{w})}{t}\right\}.
\]
This set is a non-empty compact convex set~\cite[Definition (1.1)]{Clarke75}. Given $\bm{w}\in \Omega_t$, for $t=1,\ldots,T$, the Clarke subdifferential is a singleton with $\partial_{C,\bm{x}} \LCal(\bm{x},\bm{w}) = \{\nabla_{\bm x} \LCal(\bm{x},\bm{w})\}$ when $\LCal$ is differentiable at $\bm{x}$.  Having a set of samples $\{\bm{w}_i^t\}_{i=1}^m$ collected, a natural possible gradient approximation is 
\begin{equation}\label{eq:grad-approx}   \widehat{\nabla}\FCal_t(\bm{x}_t;\bm{w}_1^t,\ldots,\bm{w}_m^t) = \frac{1}{m}\sum_{i=1}^m \bm{g} (\bm{x}_t,\bm{w}_i^t)
\end{equation}
for some $\bm{g} (\bm{x}_t,\bm{w}_i^t)\in \partial_C \LCal(\bm{x}_t,\bm{w}_i^t)$. Nevertheless, there can be other possible candidates for gradient approximation, which we will see in Section~\ref{sec:cvar}. We assume any gradient approximation candidate satisfies the following assumption.
\begin{assumption}[Moments of Gradient Approximation]
For $t=1,\ldots,T$, the mean and variance of the gradient approximation $\widehat{\nabla}\FCal_t(\bm{x};\bm{w}_1^t,\ldots,\bm{w}_m^t)$ satisfies
    \begin{equation*}
    \E_{\bm{w}_1^t,\ldots,\bm{w}_m^t}[\widehat{\nabla}\FCal_t(\bm{x};\bm{w}_1^t,\ldots,\bm{w}_m^t)] = \nabla \FCal_t(\bm{x})
    \end{equation*}
    and
    \begin{equation*}
        \E_{\bm{w}_1^t,\ldots,\bm{w}_m^t}[\|\widehat{\nabla} \FCal_t (\bm{x}_t;\bm{w}_1^t,\ldots,\bm{w}_m^t) - \nabla \FCal_t (\bm{x}_t)\|^2] \le \sigma_t^2
    \end{equation*}
    for some $\sigma_t>0$.
\end{assumption}

To evaluate the performance of online SGD, we use the notion of regret:
\begin{equation}\label{eq:regret}
    {\rm Regret}(T) = \sum_{t=1}^T \E_{\{\bm{w}_1^\tau,\ldots,\bm{w}_m^\tau\}_{\tau=1}^{t-1}}[\FCal_t(\bm{x}_t) - \FCal_t^*],
\end{equation}
where $\FCal_t^* = \min_{\bm{x}} \FCal_t(\bm{x})$. Moreover, we denote $\bm{x}_t^*\in \arg\min_{\bm{x}} \FCal_t(\bm{x})$. The notion of regret is a standard performance metric in online optimization literature~\cite{LTS20}, which measures the cumulative losses deviating from the cumulative optimal losses over all time steps. Despite the fact that the vast majority of existing works derive bounds of regret via the dynamics of an optimal solution $\bm{x}_t^*$ between successive time steps~\cite{MSJ+16,LTS20,BSR18}, our goal, instead, is to bound the regret in terms of the cumulative distribution drifts and the cumulative gradient error caused by stochasticity. This bound is more intuitive since it can capture the impact of the distribution drifts on the regret. Another goal is to derive conditions that can guarantee a sublinear regret bound (i.e., ${\rm Regret}(T) \le o(T)$); in other words, the conditions that the loss $\FCal_t(\bm{x}_t)$ is getting asymptotically close to an optimal loss $\FCal_t^*$ as $\frac{1}{T}{\rm Regret}(T)\to 0$.

To characterize the distribution drifts, we employ the Wasserstein distance, which is defined below.
\begin{definition}[Wasserstein Distance]
    Let $\MCal (\R^{n_w})$ be the set of all probability distributions $\QQ$ on $\R^{n_w}$ such that $\E_{\xi \sim \QQ} \{ \|\xi\| \} < \infty$. For all $\PP, \QQ \in \MCal (\R^{n_w})$, the type-1 Wasserstein distance is defined as
\[
\Mmf(\PP, \QQ) \coloneqq \inf_{\Pi\in\mathcal{J}(\PP,\QQ)} \left\{ \int_{\R^{n_w} \times \R^{n_w}} \|\xi_1 - \xi_2 \| \Pi (d\xi_1, d \xi_2)\right\}
\]
where $\mathcal{J}(\PP,\QQ)$ is the set of joint distributions on $\xi_1$ and $\xi_2$ with marginals $\PP$ and $\QQ$, respectively.
\end{definition}

Wasserstein distance, which arises from optimal transport, has gained a lot of attention in statistics and machine learning in the last decade; see, e.g., \cite{KPT+17,LCS20}. Contrary to Kullback-Leibler divergence, Wasserstein distance is well-defined even when the support sets of two distributions are different. This provides more flexibility in the application since the support set may vary with time as well. In this work, we use the type-1 Wasserstein distance, which is also known as (aka) Kantorovich metric, to perform the analysis. The distribution drifts can then be characterized via Wasserstein distance in the following assumption.
\begin{assumption}[Bounded Distribution Drifts]\label{assum:drift}
    For $t=1,\ldots,T-1$, the probability distribution at successive time steps vary slowly by
    \[
\mathfrak{M}(\PP_{t+1}, \PP_t) \le \eta_t
    \]
    for some $\eta_t > 0$.
\end{assumption}

\subsection{Performance Analysis}
To analyze the performance of stochastic online gradient descent, we need a number of assumptions imposed on the loss function, which will be shown in Assumptions~\ref{assum:smooth}--\ref{assum:Lip-w}.

\begin{assumption}[Smoothness]\label{assum:smooth}
    Under Assumption~\ref{assum:shapiro}, for $t=1,\ldots,T$, $\FCal_t(\bm{x}) = \E_{\bm{w}\sim\PP_t} \LCal(\bm{x},\bm{w})$ is $\beta$-smooth; i.e., for any $\bm{x}, \bm{y} \in \R^{n_x}$, it holds that
\[
\| \nabla \FCal_t(\bm{y}) - \nabla \FCal_t (\bm{x}) \| \le \beta \|\bm{y} - \bm{x} \|.
\]
\end{assumption}
The smoothness property guarantees a quadratic upper approximation of the loss function at each point in the domain~\cite[Lemma 5.7]{Beck17}. This property, aka descent lemma, is a key element in proving the descent of many gradient methods.
\begin{lemma}[Descent Lemma]\label{lem:des-lem}
    Under Assumptions~\ref{assum:shapiro} and~\ref{assum:smooth}, for every $\bm{x},\bm{y}\in\R^{n_x}$ and $\bm{z}\in [\bm{x},\bm{y}] \coloneqq \{(1-\gamma) \bm{x} + \gamma \bm{y} \colon \gamma \in [0,1] \}$, we have
    \[
\FCal_t(\bm{y}) \le \FCal_t(\bm{x}) + \langle \nabla \FCal_t(\bm{z}), \bm{y} - \bm{x} \rangle + \frac{\beta}{2}\| \bm{y} - \bm{x} \|^2.
    \]
\end{lemma}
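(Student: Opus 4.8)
The plan is to reduce everything to the one–dimensional fundamental theorem of calculus along the segment joining $\bm{x}$ and $\bm{y}$, and then absorb the fact that the gradient is evaluated at the intermediate point $\bm{z}$ rather than at $\bm{x}$ into the quadratic remainder. First I would observe that the preceding Differentiability Lemma together with Assumption~\ref{assum:smooth} guarantees that $\FCal_t$ is continuously differentiable (its gradient is $\beta$-Lipschitz, hence continuous), so the scalar map $\phi(s) \coloneqq \FCal_t(\bm{x} + s(\bm{y}-\bm{x}))$ is $C^1$ on $[0,1]$ and the fundamental theorem of calculus gives
\[
\FCal_t(\bm{y}) - \FCal_t(\bm{x}) = \int_0^1 \langle \nabla\FCal_t(\bm{x} + s(\bm{y}-\bm{x})), \bm{y}-\bm{x}\rangle \, ds.
\]

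Next, writing $\bm{z} = \bm{x} + \gamma(\bm{y}-\bm{x})$ for the $\gamma\in[0,1]$ supplied by the definition of $[\bm{x},\bm{y}]$, I would represent the linear term as $\langle \nabla\FCal_t(\bm{z}), \bm{y}-\bm{x}\rangle = \int_0^1 \langle \nabla\FCal_t(\bm{z}), \bm{y}-\bm{x}\rangle \, ds$ and subtract it, so that the quantity to be bounded becomes
\[
\FCal_t(\bm{y}) - \FCal_t(\bm{x}) - \langle \nabla\FCal_t(\bm{z}), \bm{y}-\bm{x}\rangle = \int_0^1 \langle \nabla\FCal_t(\bm{x}+s(\bm{y}-\bm{x})) - \nabla\FCal_t(\bm{z}), \bm{y}-\bm{x}\rangle \, ds.
\]
Applying Cauchy--Schwarz and then the $\beta$-smoothness of Assumption~\ref{assum:smooth} bounds the integrand by $\beta \|\bm{x}+s(\bm{y}-\bm{x}) - \bm{z}\| \, \|\bm{y}-\bm{x}\|$. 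The key simplification is the identity $\bm{x}+s(\bm{y}-\bm{x}) - \bm{z} = (s-\gamma)(\bm{y}-\bm{x})$, which collapses this expression to $\beta \, |s-\gamma| \, \|\bm{y}-\bm{x}\|^2$.

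It then remains to bound $\int_0^1 |s-\gamma|\,ds$. Splitting the integral at $s=\gamma$ yields $\tfrac{1}{2}\bigl(\gamma^2 + (1-\gamma)^2\bigr)$, a convex function of $\gamma$ whose maximum over $[0,1]$ is $\tfrac12$, attained at the endpoints $\gamma\in\{0,1\}$; hence $\int_0^1|s-\gamma|\,ds \le \tfrac12$ for every $\gamma\in[0,1]$. Multiplying through produces the claimed remainder $\tfrac{\beta}{2}\|\bm{y}-\bm{x}\|^2$. I do not anticipate a serious obstacle: the only points needing care are justifying the fundamental-theorem-of-calculus step (which relies on the $C^1$ regularity from the Differentiability Lemma and Assumption~\ref{assum:smooth}) and recognizing that a \emph{uniform} bound $\tfrac12$, rather than a $\gamma$-dependent constant, is exactly what makes the inequality hold for \emph{every} $\bm{z}$ on the segment, with the familiar coefficient recovered sharply in the case $\bm{z}=\bm{x}$.
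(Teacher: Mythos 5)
Your proof is correct. The paper itself does not prove this lemma; it simply cites the descent lemma from Beck's textbook, whose standard statement and proof correspond to the special case $\bm{z}=\bm{x}$ (equivalently $\gamma=0$ in your parametrization). Your argument follows the same classical template --- the fundamental theorem of calculus along the segment, Cauchy--Schwarz, and the $\beta$-Lipschitz continuity of $\nabla\FCal_t$ --- but it genuinely covers the extra generality that the paper's statement asserts and its citation does not obviously deliver, namely that the gradient may be evaluated at \emph{any} point $\bm{z}$ on the segment $[\bm{x},\bm{y}]$. The key additional step is your computation $\int_0^1|s-\gamma|\,ds=\tfrac12\bigl(\gamma^2+(1-\gamma)^2\bigr)\le\tfrac12$ for all $\gamma\in[0,1]$, which shows the familiar constant $\beta/2$ is a uniform (indeed worst-case, attained at the endpoints) bound over all admissible $\bm{z}$; for interior $\bm{z}$ your argument even yields a strictly smaller remainder. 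The regularity needed for the fundamental-theorem-of-calculus step is adequately justified, since Assumption~\ref{assum:smooth} makes $\nabla\FCal_t$ Lipschitz and hence continuous along the segment. In short, your proposal is a complete and slightly sharper argument for a statement the paper leaves to a reference that, strictly speaking, only covers its special case.
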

Moreover, we assume that $\FCal_t$ satisfies the Polyak-\L{}ojasiewicz condition.
\begin{assumption}[Polyak-\L{}ojasiewicz Condition]\label{assum:PL}
Under Assumption~\ref{assum:shapiro}, for $t=1,\ldots,T$, $\FCal_t(\bm{x})$ satisfies the Polyak-\L{}ojasiewicz (PL) condition on a set $\mathcal{X}$ with constant $\mu$; i.e., for all $\bm{x} \in \mathcal{X}$,
\[
\frac{1}{2}\|\nabla \FCal_t (\bm{x}) \|^2 \ge \mu (\FCal_t (\bm{x}) - \FCal_t^*).
\]
\end{assumption}
PL condition has been known to be a simple condition that guarantees a global linear convergence rate for gradient descent in offline optimization~\cite{KNS16}. Since it does not require convexity on the whole domain, it is gaining popularity especially in machine learning where loss functions are generally non-convex; see, for example, \cite{LZB22,MSM22}. Although it is not clear how to check the PL condition of $\FCal_t$ without knowing the true underlying distribution, there are scenarios that the condition reduce to the PL condition of $\LCal(\bm{x},\bm{w})$ at the mean of the parameter. For example, consider $\LCal(\bm{x},\bm{w}) = \frac{1}{2}\|g(\bm{x}) - h(\bm{w})\|^2$ for some function $g\colon\R^{n_x}\to\R^p$ and some affine function $h\colon\R^{n_w}\to\R^p$. Then, because of the fact that $\E_{\bm{w}}[\LCal(\bm{x},\bm{w})] = \nabla g(\bm{x})^T g(\bm{x}) + \langle g(\bm{x}), h(\E[\bm{w}])\rangle $, the PL condition of $\E_{\bm{w}}\LCal(\bm{x},\bm{w})$ follows if it holds for $\LCal(\bm{x},\E[\bm{w}])$.

The PL condition, combining with the smoothness property in Assumption~\ref{assum:smooth}, results in two-sided approximation bounds on the function loss from an optimal value in terms of the distance of a point from the optimal set.
\begin{lemma}[Bounds on Losses]\label{lem:QG}
    Under Assumptions~\ref{assum:shapiro},~\ref{assum:smooth} and \ref{assum:PL}, for $t=1,\ldots,T$, the following holds for all $\bm{x}\in\R^{n_x}$:
    \begin{equation}\label{eq:PL-cons}
        \frac{\mu}{2}\|\bm{x} - {\rm proj}_{\XCal^*}(\bm{x})\|^2 \le \FCal_t(\bm{x}) - \FCal_t^* \le \frac{\beta}{2} \|\bm{x} - {\rm proj}_{\XCal^*}(\bm{x})\|^2,
    \end{equation}
    where the set $\XCal^*$ is defined as the set of minimizers of $\FCal_t$.
\end{lemma}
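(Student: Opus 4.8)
The plan is to prove the two inequalities in \eqref{eq:PL-cons} separately, treating the right-hand (smoothness) bound and the left-hand (PL) bound by different means. Throughout, write $\bar{\bm{x}} \coloneqq {\rm proj}_{\XCal^*}(\bm{x})$, which is a global minimizer, so that $\FCal_t(\bar{\bm{x}}) = \FCal_t^*$ and, by differentiability of $\FCal_t$ together with first-order optimality, $\nabla\FCal_t(\bar{\bm{x}}) = \bm{0}$. The upper bound is the routine direction; the lower bound, which is the quadratic-growth consequence of the PL condition, is where the actual work lies.

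For the upper bound I would simply invoke the descent lemma (Lemma~\ref{lem:des-lem}) with base point $\bar{\bm{x}}$ and target $\bm{x}$, choosing the intermediate point $\bm{z} = \bar{\bm{x}} \in [\bar{\bm{x}}, \bm{x}]$. Since $\nabla\FCal_t(\bar{\bm{x}}) = \bm{0}$, the linear term vanishes and we obtain $\FCal_t(\bm{x}) \le \FCal_t^* + \tfrac{\beta}{2}\|\bm{x} - \bar{\bm{x}}\|^2$, which is exactly the right inequality. No further estimate is needed here.

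The lower bound is the main obstacle, and I would derive it from the PL condition through a gradient-flow argument. Consider the curve $\bm{x}(\cdot)$ solving $\dot{\bm{x}}(s) = -\nabla\FCal_t(\bm{x}(s))$ with $\bm{x}(0) = \bm{x}$. Along this flow, $\frac{d}{ds}\FCal_t(\bm{x}(s)) = -\|\nabla\FCal_t(\bm{x}(s))\|^2 \le -2\mu\,(\FCal_t(\bm{x}(s)) - \FCal_t^*)$ by Assumption~\ref{assum:PL}, so $\FCal_t(\bm{x}(s)) - \FCal_t^*$ decays to $0$ and the flow converges to some $\bm{x}_\infty \in \XCal^*$. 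Setting $D(s) \coloneqq \sqrt{\FCal_t(\bm{x}(s)) - \FCal_t^*}$ and using the PL inequality in the form $\|\nabla\FCal_t(\bm{x}(s))\| \ge \sqrt{2\mu}\,D(s)$, a short computation gives $\frac{d}{ds}D(s) \le -\sqrt{\mu/2}\,\|\nabla\FCal_t(\bm{x}(s))\|$. Integrating and bounding the distance by the arc length,
\[
\|\bm{x} - \bm{x}_\infty\| \le \int_0^\infty \|\dot{\bm{x}}(s)\|\,ds = \int_0^\infty \|\nabla\FCal_t(\bm{x}(s))\|\,ds \le \sqrt{\tfrac{2}{\mu}}\,\bigl(D(0) - D(\infty)\bigr) = \sqrt{\tfrac{2}{\mu}}\,D(0).
\]
Since $\bm{x}_\infty \in \XCal^*$, we have $\|\bm{x} - \bar{\bm{x}}\| \le \|\bm{x} - \bm{x}_\infty\|$, and squaring the resulting inequality $\|\bm{x} - \bar{\bm{x}}\| \le \sqrt{2/\mu}\,\sqrt{\FCal_t(\bm{x}) - \FCal_t^*}$ yields the left inequality in \eqref{eq:PL-cons}.

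The delicate points I expect to spend care on are all in this last step: justifying existence, well-definedness and convergence of the gradient flow (here $\beta$-smoothness gives local existence and the energy decay gives boundedness of the trajectory), and, crucially, ensuring that the PL inequality is available at \emph{every} point $\bm{x}(s)$ of the trajectory. Since Assumption~\ref{assum:PL} only posits PL on a set $\XCal$, the clean statement ``for all $\bm{x}\in\R^{n_x}$'' implicitly requires either $\XCal = \R^{n_x}$ or that the descent flow started at $\bm{x}$ remains in $\XCal$; I would state this restriction explicitly, or else restrict the conclusion to such $\bm{x}$. Finally, to avoid the continuous-time machinery one could instead run the analogous discrete argument along the gradient-descent iterates with a telescoping sum over $\sqrt{\FCal_t - \FCal_t^*}$, which reaches the same constant $\sqrt{2/\mu}$; the ODE version is merely cleaner to present.
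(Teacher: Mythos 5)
Your proof is correct and takes essentially the same route as the paper: the upper bound is exactly the paper's descent-lemma argument anchored at ${\rm proj}_{\XCal^*}(\bm{x})$ (where the gradient vanishes), and your gradient-flow derivation of the quadratic-growth lower bound is precisely the proof of the result the paper simply cites from \cite{KNS16}. Your caveat that Assumption~\ref{assum:PL} only posits PL on a set $\XCal$ while the lemma is stated for all $\bm{x}\in\R^{n_x}$ is fair, and applies equally to the paper's version.
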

\begin{proof}
    The first inequality follows from \cite[Thereom 2 and Appendix A]{KNS16}. The second inequality is the direct consequence of (i) descent lemma after taking expectation over $\bm{w}\sim\PP_t$, and (ii) putting $\bm{y} = \bm{x}$, $\bm{x} = {\rm proj}_{\XCal_t^*}(\bm{x})$ and $\bm{z} = {\rm proj}_{\XCal_t^*}(\bm{x})$.
\end{proof}
The next assumption that we impose on $\LCal$ is the Lipschitzness wrt the second argument.
\begin{assumption}[Lipschitzness wrt the Second Argument]\label{assum:Lip-w}
Let $\bm{x}\in\R^{n_x}$. $\LCal$ is Lipschitz continuous wrt the second argument $\bm{w}\in\R^{n_w}$; i.e., there exists a constant $K_w(\bm{x})$ depending on $\bm{x}$ such that
    \begin{equation}\label{eq:K_w}
        \left| \LCal(\bm{x},\bm{w}) - \LCal(\bm{x},\bm{w}')\right| \le K_w(\bm{x}) \cdot \|\bm{w} - \bm{w}' \|,\quad{\rm for}~i=1,\ldots,n_x.
    \end{equation}
    Moreover, we assume that there exists a universal constant $K = \max_{\bm x} K_w(\bm{x}) < \infty$ such that \eqref{eq:K_w} holds.
\end{assumption}

Similarly, we can bound the difference between two successive loss function values at the same point.
\begin{lemma}[Difference between Successive Loss Functions]\label{lem:loss-var}
Under Assumptions~\ref{assum:drift} and~\ref{assum:Lip-w}, we have
    \[
|\FCal_{t+1}(\bm{x}) - \FCal_t(\bm{x})| \le K\eta_t.
    \]
\end{lemma}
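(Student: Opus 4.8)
The plan is to exploit the primal, coupling-based definition of the type-1 Wasserstein distance given above, so that the Lipschitz constant $K$ from Assumption~\ref{assum:Lip-w} can be pulled directly out of a transport integral without invoking any duality machinery. First I would fix $\bm{x}\in\R^{n_x}$ and an arbitrary coupling $\Pi\in\mathcal{J}(\PP_{t+1},\PP_t)$, i.e.\ a joint law on $(\bm{w}_1,\bm{w}_2)$ whose marginals are $\PP_{t+1}$ and $\PP_t$. Because the marginals match, integrating $\LCal(\bm{x},\bm{w}_1)$ against $\Pi$ recovers $\FCal_{t+1}(\bm{x})$ and integrating $\LCal(\bm{x},\bm{w}_2)$ against $\Pi$ recovers $\FCal_t(\bm{x})$, so I can write the difference as a single integral
\[
\FCal_{t+1}(\bm{x}) - \FCal_t(\bm{x}) = \int_{\R^{n_w}\times\R^{n_w}} \bigl[\LCal(\bm{x},\bm{w}_1) - \LCal(\bm{x},\bm{w}_2)\bigr]\, \Pi(d\bm{w}_1,d\bm{w}_2).
\]

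Next I would take absolute values, move them inside the integral by the triangle inequality, and apply the Lipschitz bound \eqref{eq:K_w} with the universal constant $K=\max_{\bm{x}}K_w(\bm{x})$ from Assumption~\ref{assum:Lip-w}, yielding
\[
|\FCal_{t+1}(\bm{x}) - \FCal_t(\bm{x})| \le \int \bigl|\LCal(\bm{x},\bm{w}_1) - \LCal(\bm{x},\bm{w}_2)\bigr|\, \Pi(d\bm{w}_1,d\bm{w}_2) \le K \int \|\bm{w}_1 - \bm{w}_2\|\, \Pi(d\bm{w}_1,d\bm{w}_2).
\]
Since this holds for every coupling $\Pi\in\mathcal{J}(\PP_{t+1},\PP_t)$ while the left-hand side does not depend on $\Pi$, I would take the infimum over all such couplings on the right, which by definition equals $K\,\Mmf(\PP_{t+1},\PP_t)$. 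Finally, invoking the drift bound of Assumption~\ref{assum:drift}, $\Mmf(\PP_{t+1},\PP_t)\le\eta_t$, gives the claim $|\FCal_{t+1}(\bm{x}) - \FCal_t(\bm{x})| \le K\eta_t$.

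This argument is essentially a one-line consequence of Kantorovich--Rubinstein duality once one observes that $\LCal(\bm{x},\cdot)/K$ is $1$-Lipschitz; I prefer the coupling route because it uses only the primal definition already stated and needs no additional duality theorem. The only points requiring mild care are measurability and integrability --- that $\LCal(\bm{x},\cdot)$ is integrable against each $\PP_t$ (guaranteed since $\FCal_t$ is finite valued by Assumption~\ref{assum:shapiro}) and that the finite Lipschitz modulus together with the finite first moments built into $\MCal(\R^{n_w})$ keep the transport integral finite. I expect the main obstacle to be purely cosmetic rather than substantive: it is the \emph{universality} of $K$ (its independence of $\bm{x}$) that makes the resulting bound uniform in $\bm{x}$ and hence directly usable inside the regret analysis, and this is exactly the role of the second half of Assumption~\ref{assum:Lip-w}; without it, the $\bm{x}$-dependent modulus $K_w(\bm{x})$ would propagate into the bound and break the subsequent cumulative estimates.
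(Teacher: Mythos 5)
Your proposal is correct and follows essentially the same route as the paper: the paper's one-line proof simply asserts $|\E_{\PP_{t+1}}\LCal(\bm{x},\cdot) - \E_{\PP_t}\LCal(\bm{x},\cdot)| \le \Mmf(\PP_t,\PP_{t+1})\cdot K_w \le K\eta_t$, which is exactly the Lipschitz-versus-Wasserstein bound you establish, just stated without detail (the paper invokes the Kantorovich--Rubinstein mechanism explicitly only in Remark~\ref{rmk:partial-shift} for the analogous gradient-shift estimate). Your coupling-based derivation supplies the justification the paper omits and correctly identifies the role of the universal constant $K$.
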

\begin{proof}
The result directly follows from
\begin{align*}
    |\FCal_{t+1}(\bm{x}) - \FCal_t(\bm{x})| = |\E_{\bm{w}\sim\PP_{t+1}}\LCal(\bm{x},\bm{w}) - \E_{\bm{w}\sim\PP_t}\LCal(\bm{x},\bm{w})|\le \Mmf(\PP_t,\PP_{t+1})\cdot K_w \le K \eta_t.
\end{align*}
\end{proof}

The last assumption on $\LCal$ is concerned with the boundedness of the expectation drift of $\nabla \LCal$ at consecutive time steps.
\begin{assumption}[Shift of Partial Derivative]\label{assum:partial-shift}
    There exists an increasing function $J\colon\R_+\to\R_+$ such that
    \begin{equation}\label{eq:J_w}
        \|  \E_{\bm{w}\sim\PP_{t+1}}[\nabla \LCal(\bm{x},\bm{w})] - \E_{\bm{w}\sim\PP_t}[\nabla \LCal(\bm{x},\bm{w})] \| \le J(\eta_t).
    \end{equation}
    for all $\bm{x}\in \R^{n_x}$, where $\eta_t$ is the bound on the distribution drift as defined in Assumption~\ref{assum:drift}.
\end{assumption}
\begin{remark}\label{rmk:partial-shift}
    Assumption~\ref{assum:partial-shift} assumes that the shifts of the expectation of every partial derivative between successive time steps are bounded by the Wasserstein distance of the two distributions. This can be satisfied when every partial derivative $\frac{\partial \LCal(\bm{x},\bm{w})}{\partial x_i}$ (for $i=1,\ldots, m$) is Lipschitz continuous. Specifically, denote $\CCal(\bm{x})$ to be the set of differentiable points of $\LCal$ wrt $\bm{w}$ and assume that $\PCal^t(\CCal(\bm{x})) = 1$ for all $t$. For any $\bm{w},\bm{w}'\in \CCal(\bm{x})$, there exists a constant $L_w(\bm{x})$ depending on $\bm{x}$ such that
        \begin{equation}\label{eq:L_w}
        \left| \frac{\partial \LCal(\bm{x},\bm{w})}{\partial x_i} - \frac{\partial \LCal(\bm{x},\bm{w}')}{\partial x_i}\right| \le L_w(\bm{x}) \cdot \|\bm{w} - \bm{w}' \|\quad {\rm for}~i=1,\ldots,n_x.
    \end{equation}
    Assume that $L \coloneqq \max_{\bm x} L_w(\bm{x}) < \infty$. Then, using Kantorovich-Rubinstein duality~\cite{thick19}, we have
    \begin{align*}
    \|  \E_{\bm{w}\sim\PP_{t+1}}[\nabla \LCal(\bm{x},\bm{w})] - \E_{\bm{w}\sim\PP_t}[\nabla \LCal(\bm{x},\bm{w})] \| &\le \sum_{i=1}^{n_x} \left|\E_{\bm{w}\sim \PP_{t+1}}\left[\frac{\partial \LCal(\bm{x},\bm{w})}{\partial x_i}\right] -  \E_{\bm{w}\sim \PP_t} \left[\frac{\partial \LCal(\bm{x},\bm{w})}{\partial x_i}\right] \right| \\
         &= \sum_{i=1}^{n_x}\inf_{\Pi(\PCal^{t+1},\PCal^t)} \left| \int_{\CCal(\bm{x})} \frac{\partial \LCal(\bm{x},\bm{w})}{\partial x_i} - \frac{\partial \LCal(\bm{x},\bm{w'})}{\partial x_i} d\Pi \right|  \\
        &\le \sum_{i=1}^{n_x} \inf_{\Pi(\PCal^{t+1},\PCal^t)}  \int_{\CCal(\bm{x})} L \|\bm{w} - \bm{w}' \| d\Pi \\
        &= n_x L\eta_t.
    \end{align*}
    However, we impose Assumption~\ref{assum:partial-shift} instead of the Lipschitzness assumption \eqref{eq:L_w} to gain some flexibility in the class of loss function $\LCal$. Under our setting, $\LCal(\bm{x},\bm{w})$ can be non-differentiable at some point $\bm{x}$. In this case, \eqref{eq:L_w} may not hold but \eqref{eq:J_w} may still hold. We will see an example in Section~\ref{sec:cvar}.
\end{remark}

Define the distance between two sets $\XCal$ and $\YCal$ by
\begin{equation*}
    {\rm dist}(\XCal,\YCal) = \inf_{\bm{x}\in\XCal,\bm{y}\in\YCal} \|\bm{x} - \bm{y}\|.
\end{equation*}
We are now ready to characterize the path variations between minimizers at successive time steps.
\begin{lemma}[Difference of Successive Optimal Values]\label{lem:sol-var}
    Under Assumptions~\ref{assum:shapiro},~\ref{assum:drift},~\ref{assum:PL}--\ref{assum:partial-shift}, the difference between the optimal loss values at successive time steps is upper bounded by
    \[
    \FCal_t^* - \FCal_{t+1}^* \le K\eta_t + \frac{1}{2\mu}J(\eta_t)^2 \quad{\rm for}~t=1,\ldots,T-1.
    \]
\end{lemma}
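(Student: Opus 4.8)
The plan is to anchor the estimate at a minimizer $\bm{x}_t^*\in\argmin_{\bm x}\FCal_t(\bm x)$ of the current loss and to insert the intermediate value $\FCal_{t+1}(\bm{x}_t^*)$, splitting the gap into a pointwise distribution-drift term (handled by Lemma~\ref{lem:loss-var}) and a suboptimality term (handled by the PL condition). Writing $\FCal_t^* = \FCal_t(\bm{x}_t^*)$ and $\FCal_{t+1}^* = \FCal_{t+1}(\bm{x}_{t+1}^*)$, I would decompose
\[
\FCal_t^* - \FCal_{t+1}^* = \underbrace{\left(\FCal_t(\bm{x}_t^*) - \FCal_{t+1}(\bm{x}_t^*)\right)}_{(\mathrm{A})} + \underbrace{\left(\FCal_{t+1}(\bm{x}_t^*) - \FCal_{t+1}^*\right)}_{(\mathrm{B})}.
\]
Term $(\mathrm{A})$ is precisely the pointwise difference of two consecutive loss functions at the fixed point $\bm{x}_t^*$, so Assumptions~\ref{assum:drift} and Lipschitzness in the second argument (via Lemma~\ref{lem:loss-var}) give $(\mathrm{A}) \le |\FCal_t(\bm{x}_t^*) - \FCal_{t+1}(\bm{x}_t^*)| \le K\eta_t$.

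For term $(\mathrm{B})$, which is the suboptimality of $\bm{x}_t^*$ measured against the next loss $\FCal_{t+1}$, I would apply the PL condition (Assumption~\ref{assum:PL}) for $\FCal_{t+1}$ at $\bm{x}_t^*$:
\[
(\mathrm{B}) = \FCal_{t+1}(\bm{x}_t^*) - \FCal_{t+1}^* \le \frac{1}{2\mu}\left\|\nabla\FCal_{t+1}(\bm{x}_t^*)\right\|^2.
\]
The crucial observation is that $\bm{x}_t^*$ is an unconstrained minimizer of the differentiable function $\FCal_t$, so first-order optimality yields $\nabla\FCal_t(\bm{x}_t^*)=\bm{0}$. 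I would therefore rewrite $\nabla\FCal_{t+1}(\bm{x}_t^*) = \nabla\FCal_{t+1}(\bm{x}_t^*) - \nabla\FCal_t(\bm{x}_t^*)$, express both gradients as expectations of $\nabla_{\bm x}\LCal$ using the gradient formula $\nabla\FCal_t(\bm x)=\E_{\bm w\sim\PP_t}[\nabla_{\bm x}\LCal(\bm x,\bm w)]$ established earlier under Assumption~\ref{assum:shapiro}, and then invoke the expected-gradient shift bound of Assumption~\ref{assum:partial-shift} to conclude $\|\nabla\FCal_{t+1}(\bm{x}_t^*)\|\le J(\eta_t)$. Substituting gives $(\mathrm{B}) \le \frac{1}{2\mu}J(\eta_t)^2$, and summing the two bounds produces the claimed inequality.

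The only genuinely delicate points I anticipate are the following. First, the PL inequality of Assumption~\ref{assum:PL} is posited only on the set $\XCal$, so applying it to $\FCal_{t+1}$ at $\bm{x}_t^*$ requires $\bm{x}_t^*\in\XCal$; this is automatic if PL holds globally on $\R^{n_x}$, and otherwise must be taken as a standing hypothesis (indeed, exactly the subtlety the paper flags for the CVaR application). Second, and the real engine of the proof, is the gradient-insertion step: recognizing that the stationarity $\nabla\FCal_t(\bm{x}_t^*)=\bm{0}$ converts $\nabla\FCal_{t+1}(\bm{x}_t^*)$ into precisely the expected-gradient drift that Assumption~\ref{assum:partial-shift} controls. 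Once these are in place, the remaining manipulations are routine triangle-type estimates.
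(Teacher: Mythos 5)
Your proposal is correct and follows essentially the same route as the paper's own proof: the same decomposition through $\FCal_{t+1}(\bm{x}_t^*)$, with term (A) bounded via Lemma~\ref{lem:loss-var} and term (B) bounded by combining the PL inequality with the stationarity $\nabla\FCal_t(\bm{x}_t^*)=\bm{0}$ and the gradient-shift bound of Assumption~\ref{assum:partial-shift}. Your remark about needing $\bm{x}_t^*\in\XCal$ for the PL step is a fair observation that the paper leaves implicit.
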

\begin{proof}
Applying Assumption~\ref{assum:PL}, we have
\begin{align}
   \FCal_{t+1}(\bm{x}_t^*) - \FCal_{t+1}^* \le \frac{1}{2\mu}\| \nabla \FCal_{t+1}(\bm{x}_t^*)\|^2 \le \frac{1}{2\mu}\| \nabla \FCal_{t+1}(\bm{x}_t^*) - \nabla \FCal_t(\bm{x}_t^*)\|^2. \label{eq:xx1}
\end{align} 
Moreover, applying Assumption~\ref{assum:partial-shift}, for all $\bm{x}\in\R^{n_x}$, it holds that
\begin{align}
     \| \nabla \FCal_{t+1}(\bm{x}) - \nabla \FCal_t (\bm{x})\|^2 &=
     \| \nabla \left(\E_{\bm{w}\sim\PP_{t+1}}[\LCal(\bm{x},\bm{w})]\right) - \nabla \left(\E_{\bm{w}\sim\PP_t}[\LCal(\bm{x},\bm{w})]\right) \|^2 \nonumber\\
     &=
     \|  \E_{\bm{w}\sim\PP_{t+1}}[\nabla \LCal(\bm{x},\bm{w})] - \E_{\bm{w}\sim\PP_t}[\nabla \LCal(\bm{x},\bm{w})] \|^2 \nonumber\\
     &\le J(\eta_t)^2. \label{eq:xx2}
\end{align}
Hence, using triangle inequality and the result in Lemma~\ref{lem:loss-var}, we have
\begin{align*}
    \FCal_t^* - \FCal_{t+1}^* = \FCal_t(\bm{x}_t^*) - \FCal_{t+1}(\bm{x}_t^*) + \FCal_{t+1}(\bm{x}_t^*) - \FCal_{t+1}(\bm{x}_{t+1}^*)\le K\eta_t + \frac{1}{2\mu}J(\eta_t)^2,
\end{align*}
as desired.
\end{proof}
\begin{remark}
    From the proof of Lemma~\ref{lem:sol-var}, we can also derive the distance between optimal sets at successive time steps. Specifically, let $\XCal_t^*$ be the set of minimizers of \eqref{eq:prob} at time $t$ for $t=1,\ldots,T$. Using the result of Lemma~\ref{lem:QG} and the optimality of $\bm{x}_t^*$, we have
\begin{align*}
    {\rm dist} (\XCal_t^*,\XCal_{t+1}^*)^2 
    &= \inf_{\bm{x}_t^*\in \XCal_t^*, \bm{x}_{t+1}^*\in \XCal_{t+1}^*}\| \bm{x}_t^* - \bm{x}_{t+1}^*\|^2\le \| \bm{x}_t^* - {\rm proj}_{\XCal_{t+1}^*}(\bm{x}_t^*) \|^2 \le \frac{2}{\mu} (\FCal_{t+1}(\bm{x}_t^*) - \FCal_{t+1}^*)\le \frac{J(\eta_t)^2}{\mu^2}.
\end{align*} 
\end{remark}

Armed with the above results, we are now ready to establish a regret bound of stochastic online gradient descent in distributionally time-varying online stochastic optimization.
\begin{theorem}[Regret Bound]\label{thm:PL}
    Suppose that Assumptions~\ref{assum:shapiro}--\ref{assum:partial-shift} hold and the step size satisfies $\gamma_t \equiv \gamma \in (0,\min(1/\beta,1/(2\mu)))$. Let $\zeta = -\frac{\gamma^2\beta}{2} +\gamma$, the regret can be upper bounded by
    \begin{equation}\label{eq:PLregret}
        {\rm Regret}(T) \le \frac{1}{2\mu\zeta}(\FCal_1(\bm{x}_1) - \FCal_1^*) + \frac{K}{\mu\zeta} \sum_{t=1}^{T-1} \eta_t  + \frac{1}{4\mu^2\zeta}\sum_{t=1}^{T-1} J(\eta_t)^2 + \frac{\gamma\beta}{2\mu} \sum_{t=1}^{T-1} \sigma_t^2.
    \end{equation}
\end{theorem}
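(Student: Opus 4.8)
The plan is to run a standard one-step stochastic descent analysis for the ``frozen'' objective $\FCal_t$ and then stitch consecutive time steps together using the drift lemmas. First I would invoke the descent lemma (Lemma~\ref{lem:des-lem}) with $\bm{y}=\bm{x}_{t+1}$, $\bm{x}=\bm{x}_t$, and base point $\bm{z}=\bm{x}_t$, and substitute the update rule \eqref{eq:sgd}. Taking the conditional expectation over the samples $\bm{w}_1^t,\ldots,\bm{w}_m^t$ drawn at time $t$, the cross term collapses to $-\gamma\|\nabla\FCal_t(\bm{x}_t)\|^2$ by unbiasedness, while the quadratic term splits via the bias--variance identity $\E\|\widehat{\nabla}\FCal_t\|^2=\|\nabla\FCal_t\|^2+\E\|\widehat{\nabla}\FCal_t-\nabla\FCal_t\|^2$ and is controlled by Assumption~3 (the variance bound $\le\sigma_t^2$). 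Collecting terms and recalling $\zeta=\gamma-\tfrac{\gamma^2\beta}{2}$ yields
\[
\E\big[\FCal_t(\bm{x}_{t+1})\mid\bm{x}_t\big]\le \FCal_t(\bm{x}_t)-\zeta\|\nabla\FCal_t(\bm{x}_t)\|^2+\tfrac{\beta\gamma^2}{2}\sigma_t^2.
\]

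Next I would apply the PL inequality (Assumption~\ref{assum:PL}), $\|\nabla\FCal_t(\bm{x}_t)\|^2\ge 2\mu(\FCal_t(\bm{x}_t)-\FCal_t^*)$, which is legitimate because $\zeta>0$ (indeed $\gamma<1/\beta$ gives $\zeta>\gamma/2>0$). Subtracting $\FCal_t^*$ turns the one-step inequality into the contraction $\E[\FCal_t(\bm{x}_{t+1})-\FCal_t^*\mid\bm{x}_t]\le(1-2\mu\zeta)(\FCal_t(\bm{x}_t)-\FCal_t^*)+\tfrac{\beta\gamma^2}{2}\sigma_t^2$, where $1-2\mu\zeta\in(0,1)$ since $\gamma<1/(2\mu)$. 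The crucial \emph{bridging} step then re-expresses everything at time $t+1$: writing $\FCal_{t+1}(\bm{x}_{t+1})-\FCal_{t+1}^*=[\FCal_{t+1}(\bm{x}_{t+1})-\FCal_t(\bm{x}_{t+1})]+[\FCal_t(\bm{x}_{t+1})-\FCal_t^*]+[\FCal_t^*-\FCal_{t+1}^*]$ and bounding the first bracket by $K\eta_t$ (Lemma~\ref{lem:loss-var}) and the last by $K\eta_t+\tfrac{1}{2\mu}J(\eta_t)^2$ (Lemma~\ref{lem:sol-var}). Taking total expectation and abbreviating $a_t\coloneqq\E[\FCal_t(\bm{x}_t)-\FCal_t^*]$, this produces the linear recursion $a_{t+1}\le(1-2\mu\zeta)a_t+\tfrac{\beta\gamma^2}{2}\sigma_t^2+2K\eta_t+\tfrac{1}{2\mu}J(\eta_t)^2$.

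Finally, rather than iterating the recursion to bound only $a_T$, I would sum it over $t=1,\ldots,T-1$ to directly control the cumulative quantity $\mathrm{Regret}(T)=\sum_{t=1}^T a_t$. Telescoping the left-hand side and discarding the nonnegative $a_T$ gives $2\mu\zeta\sum_{t=1}^T a_t\le a_1+\sum_{t=1}^{T-1}\big(\tfrac{\beta\gamma^2}{2}\sigma_t^2+2K\eta_t+\tfrac{1}{2\mu}J(\eta_t)^2\big)$, so dividing by $2\mu\zeta$ reproduces \eqref{eq:PLregret} term by term, with $a_1=\FCal_1(\bm{x}_1)-\FCal_1^*$ deterministic. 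The only cosmetic discrepancy is the noise coefficient, which emerges as $\tfrac{\beta\gamma^2}{4\mu\zeta}$; relaxing it with $\zeta>\gamma/2$ (again from $\gamma<1/\beta$) gives the stated $\tfrac{\gamma\beta}{2\mu}$. I expect the main obstacle to be the bridging step: the descent analysis only certifies progress for the static objective $\FCal_t$, so turning $\FCal_t(\bm{x}_{t+1})-\FCal_t^*$ into the next regret term $\FCal_{t+1}(\bm{x}_{t+1})-\FCal_{t+1}^*$ is where the time-variation must be absorbed, and it is precisely the place where the Wasserstein drift $\eta_t$ and the gradient-shift budget $J(\eta_t)$ enter; keeping $1-2\mu\zeta$ strictly inside $(0,1)$ so that the summed recursion stays finite is the accompanying technical point.
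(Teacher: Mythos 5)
Your proposal is correct and follows essentially the same route as the paper's proof: the same descent-lemma step with the bias--variance split of $\E\|\widehat{\nabla}\FCal_t\|^2$, the same PL-based contraction, the same bridging decomposition absorbing the drift via Lemmas~\ref{lem:loss-var} and~\ref{lem:sol-var} (yielding the $2K\eta_t + \tfrac{1}{2\mu}J(\eta_t)^2$ increment), and the same summation-and-rearrangement with the final relaxation $\zeta\ge\gamma/2$ of the noise coefficient. No gaps.
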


\begin{proof}
    Using Lemma \ref{lem:des-lem}, we have
    \begin{align}
        \FCal_t(\bm{x}_{t+1}) - \FCal_t(\bm{x}_t) 
        &\le \langle \nabla \FCal_t (\bm{x}_t),\bm{x}_{t+1} - \bm{x}_t \rangle + \frac{\beta}{2} \|\bm{x}_{t+1} - \bm{x}_t\|^2 \nonumber\\
        & = \left\langle \nabla \FCal_t (\bm{x}_t), -\gamma\widehat{\nabla}\FCal_t (\bm{x}_t;\bm{w}_1^t,\ldots,\bm{w}_m^t) \right\rangle + \frac{\gamma^2 \beta}{2} \left\|\widehat{\nabla}\FCal_t (\bm{x}_t;\bm{w}_1^t,\ldots,\bm{w}_m^t) \right\|^2. \nonumber
    \end{align}
    
    Taking expectation of \eqref{eq:PL-smooth} wrt $\{\bm{w}_i^t\}_{i=1}^m$ given $\bm{x}_t$ yields
    \begin{align}
        &~\E_{\bm{w}_1^t,\ldots,\bm{w}_m^t \sim \PP_t}[\FCal_t(\bm{x}_{t+1}) - \FCal_t(\bm{x}_t) | \bm{x}_t] \nonumber\\
        &\le -\gamma \| \nabla \FCal_t (\bm{x}_t)\|^2 + \frac{\gamma^2 \beta}{2} \E_{\bm{w}_1^t,\ldots,\bm{w}_m^t \sim \PP_t} \left[\left\|\widehat{\nabla}\FCal_t (\bm{x}_t;\bm{w}_1^t,\ldots,\bm{w}_m^t) \right\|^2\right] \nonumber\\
        &= -\gamma\left(1 - \frac{\gamma \beta}{2}\right)\| \nabla \FCal_t (\bm{x}_t)\|^2 + \frac{\gamma^2 \beta}{2}\left(\E_{\bm{w}_1^t,\ldots,\bm{w}_m^t \sim \PP_t} \left[\left\|\widehat{\nabla}\FCal_t (\bm{x}_t;\bm{w}_1^t,\ldots,\bm{w}_m^t) \right\|^2\right] - \| \nabla \FCal_t (\bm{x}_t)\|^2\right)\nonumber\\
        &= -\gamma\left(1 - \frac{\gamma \beta}{2}\right)\| \nabla \FCal_t (\bm{x}_t)\|^2 + \frac{\gamma^2 \beta}{2}\E_{\bm{w}_1^t,\ldots,\bm{w}_m^t \sim \PP_t} \left[\left\|\widehat{\nabla}\FCal_t (\bm{x}_t;\bm{w}_1^t,\ldots,\bm{w}_m^t) - \nabla \FCal_t (\bm{x}_t)\right\|^2\right] \nonumber\\
        &\le -\gamma\left(1 - \frac{\gamma \beta}{2}\right)\| \nabla \FCal_t (\bm{x}_t)\|^2 + \frac{\gamma^2 \beta}{2}\sigma_t^2. \label{eq:desc0}
    \end{align}
    Now, writing $\zeta = -\frac{\gamma^2\beta}{2} +\gamma$, under Assumption \ref{assum:PL}, and upon applying Lemmas~\ref{lem:loss-var} and \ref{lem:sol-var}, we obtain
    \begin{align*}
        &~\E_{\bm{w}_1^t,\ldots,\bm{w}_m^t \sim \PP_t}[\FCal_{t+1}(\bm{x}_{t+1}) - \FCal_{t+1}^*|\bm{x}_t] \\
        &\le \E_{\bm{w}_1^t,\ldots,\bm{w}_m^t \sim \PP_t}[(\FCal_{t+1}(\bm{x}_{t+1}) - \FCal_t(\bm{x}_{t+1})) + (\FCal_t(\bm{x}_{t+1}) - \FCal_t(\bm{x}_t)) + (\FCal_t(\bm{x}_t) - \FCal_t^*) + (\FCal_t^* - \FCal_{t+1}^*)|\bm{x}_t] \\
        &\le 2K \eta_t + \frac{J(\eta_t)^2}{2\mu} + (1 - 2\mu\zeta)\E_{\bm{w}_1^t,\ldots,\bm{w}_m^t \sim \PP_t}[\FCal_t(\bm{x}) - \FCal_t^*|\bm{x}_t] + \frac{\gamma^2 \beta}{2} \sigma_t^2.
    \end{align*}
Since $\gamma  \in \left(0,\min\left(\frac{1}{2\mu},\frac{1}{\beta}\right)\right)$, we see that $0< 2\mu\zeta<1$. Using the above result, we can establish a regret bound:
\begin{align}
    &~\sum_{t=1}^T \E_{\{\bm{w}_1^\tau,\ldots,\bm{w}_m^\tau\}_{\tau=1}^{t-1}}[\FCal_t(\bm{x}_t) - \FCal_t^*] \nonumber\\
    &= (\FCal_1(\bm{x}_1) - \FCal_1^*)  + \sum_{t=1}^{T-1} \E_{\bm{w}_1^{t+1},\ldots,\bm{w}_m^{t+1} \sim \PP_{t+1}}[\FCal_{t+1}(\bm{x}_{t+1}) - \FCal_{t+1}^*|\bm{x}_t] \nonumber\\
    &\le (\FCal_1(\bm{x}_1) - \FCal_1^*) + 2K \sum_{t=1}^{T-1} \eta_t + \frac{1}{2\mu} \sum_{t=1}^{T-1}J(\eta_t)^2 + (1 - 2\mu\zeta)\sum_{t=1}^{T-1} \E[\FCal_t(\bm{x}_t) - \FCal_t^*] +  \frac{\gamma^2\beta}{2} \sum_{t=1}^{T-1}\sigma_t^2 . \nonumber
\end{align}

Rearranging the terms, and since $\gamma \le 1/\beta$ implies $\zeta \ge \gamma/2$,
\begin{align}
    {\rm Regret}(T) &= \sum_{t=1}^T \E_{\{\bm{w}_1^\tau,\ldots,\bm{w}_m^\tau\}_{\tau=1}^{t-1}}[\FCal_t(\bm{x}_t) - \FCal_t^*]  \nonumber\\
    &\le \frac{1}{2\mu\zeta}(\FCal_1(\bm{x}_1) - \FCal_1^*) + \frac{K}{\mu\zeta} \sum_{t=1}^{T-1} \eta_t + \frac{1}{4\mu^2\zeta}\sum_{t=1}^{T-1} J(\eta_t)^2 + \frac{\gamma^2\beta}{4\mu\zeta } \sum_{t=1}^{T-1} \sigma_t^2 \label{eq:regret-derive0}\\
    &\le \frac{1}{2\mu\zeta}(\FCal_1(\bm{x}_1) - \FCal_1^*) + \frac{K}{\mu\zeta} \sum_{t=1}^{T-1} \eta_t + \frac{1}{4\mu^2\zeta}\sum_{t=1}^{T-1} J(\eta_t)^2 + \frac{\gamma\beta}{2\mu} \sum_{t=1}^{T-1} \sigma_t^2. \label{eq:regret-derive}
\end{align}
In particular, writing $\Theta = \min(1/\beta,1/(2\mu))$ and taking $\gamma = \Theta/\sqrt{T}$, we see that
\[
\frac{1}{\zeta} = \frac{1}{-\frac{\Theta^2\beta}{2T}+ \frac{\Theta}{\sqrt{T}}}\le \frac{1}{-\frac{\Theta}{2T}+\frac{\Theta}{\sqrt{T}}} = \frac{\sqrt{T}}{-\frac{\Theta}{2\sqrt{T}}+\Theta} \le \frac{\sqrt{T}}{-\frac{\Theta}{2}+\Theta} = \frac{2\sqrt{T}}{\Theta}.
\]
Therefore, putting it back to \eqref{eq:regret-derive} yields \eqref{eq:reg-bdvar}. 
\end{proof}

As can be seen, the online stochastic gradient descent method can achieve sublinear regret when the cumulative distribution drift $\sum_{t}\eta_t$, the cumulative squared drifts of expectation of gradients $\sum_t J(\eta_t)^2$ and the cumulative variance of the gradient approximation $\sum_t \sigma_t^2$ grow sublinearly. In particular, if $J(\eta_t) \le c_0\sqrt{\eta}_t$ for some $c_0>0$ and all $t$, the condition reduces to the sublinear growth of the cumulative distribution drift $\sum_t \eta_t$ and the cumulative variance of the gradient approximation $\sum_t \sigma_t^2$. Furthermore, if the variance of the gradient approximation is constant for all $t$ (i.e., it grows linearly), online stochastic gradient descent is still able to achieve sublinear regret by picking suitable step size, as long as the cumulative distribution drift grows sufficiently slowly (such that $\sum_t\eta_t$ and $\sum_t J(\eta_t)^2$ grow no faster than $\sqrt{T}$).

\begin{remark}
    The condition on the step size $\gamma\in (0,\min(1/\beta,1/(2\mu)))$ is used to ensure the contraction of the iterate (i.e., $0<2\mu\zeta<1$) and the simplification of the regret bound in \eqref{eq:regret-derive}. A necessary and sufficient condition the step size $\gamma$ of the online stochastic gradient descent is $\gamma\in(0,\min(2/(\mu\beta), \frac{\mu-\sqrt{\mu^2 - \mu\beta}}{\mu\beta}))$, which would yield a regret bound~\eqref{eq:regret-derive0}.
\end{remark}
\begin{remark}
    As can be seen from the right-hand side of~\eqref{eq:PLregret}, the gradient error term $\sum_t\sigma_t^2$ is coupled with the step size $\gamma$. Hence, one can have control over the gradient error term using a suitable step size rule. In particular, if $\sigma_t^2 \le \sigma^2$ for some scalar $\sigma>0$ and for all $t$, setting the step size of the online SGD as $ \gamma = \min(1/\beta,1/(2\mu))/\sqrt{T}$, the regret can be upper bounded by
    \begin{equation}\label{eq:reg-bdvar}
        {\rm Regret}(T) \le M_1\sqrt{T} + M_2 \sqrt{T}\sum_{t=1}^{T-1}\eta_t + M_3 \sqrt{T}\sum_{t=1}^{T-1}J(\eta_t)^2,
    \end{equation}
    where
    \begin{align}
    M_1 = \frac{1}{\mu\Theta}(\FCal_1(\bm{x}_1) - \FCal_1^*) + \frac{\sigma^2}{2\mu}, \quad M_2 = \frac{2K}{\mu\Theta}, \quad M_3 = \frac{1}{2\mu^2\Theta},\quad \Theta = \min(1/\beta,1/(2\mu)). \label{eq:PL-smooth}
\end{align}
This fact is particularly useful when the variance of the gradient error does not diminish over time.
\end{remark}

\begin{remark}\label{rmk:stepsize}
    For simplicity, we keep the step size $\gamma_t$ constant throughout all time steps $t$. In particular, if the variance of the measurement noise is constant at all time steps, Theorem~\ref{thm:PL} states that one may need to set the time horizon of $T$ in advance for the selection of the suitable step size $\gamma = \min(1/\beta,1/(2\mu))/\sqrt{T}$ of online stochastic gradient descent. However, in fact, the proof still follows if the step size is chosen to be $\gamma_t = \min(1/\beta,1/(2\mu))/\sqrt{t}$ for $t=1,\ldots,T$. Specifically, similar regret bound can be achieved by considering $\zeta_t = -\frac{\gamma_t^2\beta}{2} +\gamma_t$ and using the fact that
    \[
    \frac{1}{\zeta_t} = \frac{1}{-\frac{\gamma_t^2\beta}{2}+\gamma_t} = \frac{2}{-\gamma_t^2\beta+2\gamma_t} = \frac{2}{\gamma_t}\cdot\frac{1}{2 - \gamma_t\beta} \le \frac{2}{\gamma_t}\cdot\frac{1}{2 - \frac{1}{\beta}\cdot\beta} = \frac{2}{\gamma_t} \le \frac{2}{\gamma_T}
    \]
    for all $t$. In Section~\ref{sec:sim}, we will see that the latter step size would yield a better performance of online stochastic gradient descent. Moreover, such a step size does not rely on the information of time of horizon, which may be more useful in practice.
\end{remark}

    
\section{Online Stochastic Optimization under Proximal PL Condition}
In the previous section, we consider the minimization problem of a smooth data fidelity loss function where the underlying distribution the data is time-varying. Yet, its regularized version is also of interest since one may want to impose some structure on the decision vector. In this section, we show that similar regret bound can be developed for stochastic online proximal gradient descent given a sequence of loss functions satisfying the proximal PL condition.
\subsection{Problem Formulation}
Let $\FCal_t(\bm{x})\coloneqq \E_{\bm{w}\sim\PP_t} \LCal(\bm{x},\bm{w})$. In this section, we consider a sequence of optimization problems
\begin{equation}\label{eq:prox-prob}
    \min_{\bm{x} \in\R^{n_x}} [\GCal_t(\bm{x}) \coloneqq \FCal_t(\bm{x}) + \RCal(\bm{x})]
\end{equation}
for $t=1,\ldots,T$ for some potentially non-smooth convex regularizer $\RCal\colon\R^{n_x}\to\R$. The regularizer $\RCal$ can be used to impose structures on the decision vector, for example, $\RCal(\bm{x}) = \|\bm{x}\|_1$ imposes sparsity on the decision vector. 

Under Assumptions~\ref{assum:shapiro} and \ref{assum:bdd-support}, for $t=1,\ldots,T-1$, we employ the one-step stochastic proximal gradient descent
\begin{align}\label{eq:forw-grad}
    \bm{x}_{t+1} &= {\rm prox}_{\gamma_t \RCal}(\bm{x}_t - \gamma_t\widehat{\nabla}\FCal_t(\bm{x}_t;\bm{w}_1^t,\ldots,\bm{w}_m^t)) \nonumber\\
    &= \arg\min_{\bm y} \left\{\left\langle \widehat{\nabla}\FCal_t(\bm{x}_t;\bm{w}_1^t,\ldots,\bm{w}_m^t), \bm{y} - \bm{x}_t\right\rangle + \frac{1}{2\gamma_t} \|\bm{y} - \bm{x}_t\|^2 + \RCal(\bm{y}) - \RCal(\bm{x}_t)\right\}.
\end{align}
where $\widehat{\nabla}\FCal_t$ is defined in \eqref{eq:grad-approx}. We, again, use the notion of regret to evaluate its performance, namely,
\[
    {\rm Regret}(T) = \sum_{t=1}^T \E_{\{\bm{w}_1^\tau,\ldots,\bm{w}_m^\tau\}_{\tau=1}^{t-1}}[\GCal_t(\bm{x}_t) - \GCal_t^*],
\]
where $\GCal_t^*=\min_{\bm{x}} \GCal_t(\bm{x}_t)$ is the minimum loss. We also denote $\bm{x}_t^*\in\arg\min_{\bm{x}} \GCal_t(\bm{x}_t)$.
Suppose that Assumptions~\ref{assum:shapiro}--\ref{assum:smooth} and \ref{assum:Lip-w}--\ref{assum:partial-shift} hold. Moreover, we assume that the proximal Polyak-\L{}ojasiewicz condition holds for $\GCal_t$ for all $t=1,\ldots,T$.
\begin{assumption}[Proximal Polyak-\L{}ojasiewicz Condition]\label{assum:proxPL}
Under Assumption~\ref{assum:shapiro}, for $t=1,\ldots,T$, $\GCal_t$ satisfies the proximal Polyak-\L{}ojasiewicz (proximal PL) condition on a set $\mathcal{X}$ with constant $\mu$; i.e., for all $\bm{x} \in \mathcal{X}$,
\[
\frac{1}{2}\DCal_\RCal^t (\bm{x},\beta) \ge \mu (\GCal_t (\bm{x}) - \GCal_t^*),
\]
where
\begin{equation*}
    \DCal_\RCal^t (\bm{x},\delta) := -2\delta \min_{\bm y} \left\{ \langle \nabla \FCal_t(\bm{x}), \bm{y} - \bm{x}\rangle  + \frac{\delta}{2}\|\bm{y} - \bm{x}\|^2 + \RCal(\bm{y}) - \RCal(\bm{x})\right\}.
\end{equation*}
\end{assumption}
Proximal PL condition is a generalization of PL condition in non-smooth optimization. It is known that problems like support vector machine and $\ell_1$ regularized least squares satisfy proximal PL condition; see more examples in \cite[Section 4.1 and Appendix G]{KNS16}. Similar to PL condition, the quadratic growth property is also implied for functions satisfying proximal PL condition.
\begin{lemma}[Quadratic Growth]\label{lem:prox-qg}
    Let $\FCal_t\colon\R^{n_x}\to\R$ be a function that satisfies proximal PL condition. Then, under Assumption~\ref{assum:smooth}, there exists a constant $\xi>0$ such that for every $\bm{x}\in\R^{n_x}$, the following holds
    \begin{equation}\label{eq:prox-qg}
        \frac{\xi}{2}\|\bm{x} - {\rm proj}_{\XCal_t^*}(\bm{x})\|^2 \le \GCal_t(\bm{x}) - \GCal_t^*.
    \end{equation}
\end{lemma}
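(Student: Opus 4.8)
The plan is to follow the convergence-based argument of \cite{KNS16}, using proximal gradient descent with the constant step size $1/\beta$ as a purely analytic device (the iterates never enter the actual algorithm). First I would fix $\bm{x}\in\R^{n_x}$, set $\bm{x}^+ = {\rm prox}_{\RCal/\beta}(\bm{x} - \tfrac{1}{\beta}\nabla\FCal_t(\bm{x}))$, and show that one proximal gradient step yields the sufficient decrease
\[
\GCal_t(\bm{x}^+) \le \GCal_t(\bm{x}) - \frac{1}{2\beta}\DCal_\RCal^t(\bm{x},\beta).
\]
This follows by applying the descent lemma (Lemma~\ref{lem:des-lem} with $\bm{z}=\bm{x}$) to $\FCal_t$ at $\bm{x}^+$, adding $\RCal(\bm{x}^+)$ to both sides, and recognizing that $\bm{x}^+$ is exactly the minimizer appearing in the definition of $\DCal_\RCal^t(\bm{x},\beta)$. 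Combined with the proximal PL condition (Assumption~\ref{assum:proxPL}), which gives $\DCal_\RCal^t(\bm{x},\beta)\ge 2\mu(\GCal_t(\bm{x})-\GCal_t^*)$, this produces the linear contraction $\GCal_t(\bm{x}^+)-\GCal_t^* \le \rho\,(\GCal_t(\bm{x})-\GCal_t^*)$ with $\rho := 1 - \mu/\beta \in [0,1)$.

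Next I would control the step length in terms of the function decrease. Since the subproblem objective $\bm{y}\mapsto \langle\nabla\FCal_t(\bm{x}),\bm{y}-\bm{x}\rangle + \tfrac{\beta}{2}\|\bm{y}-\bm{x}\|^2 + \RCal(\bm{y})-\RCal(\bm{x})$ is $\beta$-strongly convex and minimized at $\bm{x}^+$, comparing its value at $\bm{x}$ (which is $0$) with its minimum value $-\tfrac{1}{2\beta}\DCal_\RCal^t(\bm{x},\beta)$ gives $\tfrac{\beta}{2}\|\bm{x}^+-\bm{x}\|^2 \le \tfrac{1}{2\beta}\DCal_\RCal^t(\bm{x},\beta)$. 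Chaining this with the sufficient-decrease inequality (which shows $\DCal_\RCal^t(\bm{x},\beta)\le 2\beta(\GCal_t(\bm{x})-\GCal_t(\bm{x}^+))$) yields
\[
\|\bm{x}^+-\bm{x}\| \le \sqrt{\tfrac{2}{\beta}}\,\sqrt{\GCal_t(\bm{x})-\GCal_t(\bm{x}^+)}.
\]

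Finally I would run proximal gradient from $\bm{x}$, obtaining iterates $\bm{x}_0=\bm{x}$, $\bm{x}_{k+1}=\bm{x}_k^+$, and sum the step lengths. Linear convergence gives $\GCal_t(\bm{x}_k)-\GCal_t(\bm{x}_{k+1}) \le \rho^k(\GCal_t(\bm{x})-\GCal_t^*)$, so $\sum_{k\ge 0}\|\bm{x}_{k+1}-\bm{x}_k\|$ is dominated by a convergent geometric series and is at most $\sqrt{\tfrac{2}{\beta}}\,\tfrac{1}{1-\sqrt{\rho}}\,\sqrt{\GCal_t(\bm{x})-\GCal_t^*}$. Because the tails of this sum vanish, $\{\bm{x}_k\}$ is Cauchy, hence converges to some $\bm{x}_\infty$, and linear decay of the values together with lower semicontinuity of $\GCal_t$ forces $\GCal_t(\bm{x}_\infty)=\GCal_t^*$, i.e. $\bm{x}_\infty\in\XCal_t^*$. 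Since ${\rm proj}_{\XCal_t^*}$ returns the nearest optimal point, $\|\bm{x}-{\rm proj}_{\XCal_t^*}(\bm{x})\| \le \|\bm{x}-\bm{x}_\infty\| \le \sum_{k\ge 0}\|\bm{x}_{k+1}-\bm{x}_k\|$, and squaring the bound gives \eqref{eq:prox-qg} with the explicit positive constant $\xi = \beta\,(1-\sqrt{1-\mu/\beta})^2$.

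The main obstacle is the convergence step rather than the inequalities, which are routine. One must argue that the trajectory converges to a genuine global minimizer without assuming $\XCal_t^*$ is a singleton; this is precisely what the summability (Cauchy) argument and the monotone linear decay of $\GCal_t(\bm{x}_k)-\GCal_t^*$ to zero supply. A secondary subtlety is that Assumption~\ref{assum:proxPL} is posited only on a set $\XCal$, so invoking it at every iterate is immediate when $\XCal=\R^{n_x}$ (the regime in which the statement is phrased), and for a proper subset one would additionally need the proximal gradient map to keep the whole trajectory inside $\XCal$, which I would obtain from invariance of the relevant sublevel set.
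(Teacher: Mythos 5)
Your proof is correct. Note, though, that the paper does not actually prove this lemma: it disposes of it in one line by citing the known equivalence of the proximal PL condition, the proximal error-bound condition, and quadratic growth (Appendix G of the Karimi--Nutini--Schmidt paper and Corollary 3.6 of Drusvyatskiy--Lewis), leaving the constant $\xi$ unspecified. What you have written is, in effect, the self-contained argument underlying those citations: the sufficient-decrease inequality $\GCal_t(\bm{x}^+)\le \GCal_t(\bm{x})-\tfrac{1}{2\beta}\DCal_\RCal^t(\bm{x},\beta)$, the $\beta$-strong convexity of the prox subproblem to convert function decrease into step length, and the geometric summation of step lengths along the (purely analytic) proximal-gradient trajectory to reach a genuine minimizer $\bm{x}_\infty\in\XCal_t^*$ without any singleton assumption on the solution set. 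This buys you an explicit constant $\xi=\beta\bigl(1-\sqrt{1-\mu/\beta}\bigr)^2$ in terms of $\mu$ and $\beta$, which the paper's citation-only proof does not provide, and it also establishes $\XCal_t^*\neq\emptyset$ as a byproduct. Two minor points you could make explicit: the ratio $\rho=1-\mu/\beta$ lies in $[0,1)$ because $\mu>\beta$ would force $\GCal_t(\bm{x}^+)<\GCal_t^*$ at any suboptimal $\bm{x}$, a contradiction (so either $\mu\le\beta$ or every point is already optimal and there is nothing to prove); and since $\RCal\colon\R^{n_x}\to\R$ is finite-valued convex it is continuous, so the lower-semicontinuity step at $\bm{x}_\infty$ is automatic. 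Neither affects the validity of the argument.
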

This is a direct consequence of the equivalence of proximal PL condition, proximal error bound condition and quadratic growth \cite[Appendix G]{KNS16},\cite[Corollary 3.6]{DL18}. Having the proximal PL condition, we can also bound the distance between two successive optimal sets and the difference between two successive loss function values at the same point.

\begin{lemma}[Difference between Successive Loss Functions]\label{lem:prox-loss-var}
Under Assumptions~~\ref{assum:drift} and~\ref{assum:Lip-w}, we have
    \[
|\GCal_{t+1}(\bm{x}) - \GCal_t(\bm{x})| \le K\eta_t.
    \]
\end{lemma}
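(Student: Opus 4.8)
The plan is to exploit the fact that the regularizer $\RCal$ does not depend on the time index $t$, which reduces the claim directly to the smooth-loss statement already established in Lemma~\ref{lem:loss-var}.

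First I would write out the difference explicitly using the definition $\GCal_t(\bm{x}) = \FCal_t(\bm{x}) + \RCal(\bm{x})$ from \eqref{eq:prox-prob}:
\[
\GCal_{t+1}(\bm{x}) - \GCal_t(\bm{x}) = \bigl(\FCal_{t+1}(\bm{x}) + \RCal(\bm{x})\bigr) - \bigl(\FCal_t(\bm{x}) + \RCal(\bm{x})\bigr) = \FCal_{t+1}(\bm{x}) - \FCal_t(\bm{x}).
\]
Since the same term $\RCal(\bm{x})$ appears in both $\GCal_{t+1}(\bm{x})$ and $\GCal_t(\bm{x})$ and is time-invariant, it cancels identically, and the difference between the composite losses equals the difference between the smooth parts evaluated at the same point.

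Next I would take absolute values and invoke Lemma~\ref{lem:loss-var}, whose hypotheses---Assumption~\ref{assum:drift} on the bounded distribution drift and Assumption~\ref{assum:Lip-w} on the Lipschitzness of $\LCal$ in its second argument---are precisely the assumptions standing in the present statement. This yields
\[
|\GCal_{t+1}(\bm{x}) - \GCal_t(\bm{x})| = |\FCal_{t+1}(\bm{x}) - \FCal_t(\bm{x})| \le K\eta_t,
\]
as desired.

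There is essentially no obstacle to overcome here: the only points to verify are that the regularizer is genuinely independent of $t$, which is built into the problem formulation, and that the cited lemma's assumptions coincide with the ones in force. The entire content of the lemma is the observation that adding a fixed, time-invariant regularizer does not alter the temporal variation of the objective, so the Wasserstein-based bound on $\FCal$ transfers verbatim to $\GCal$.
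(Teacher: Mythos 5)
Your proposal is correct and matches the paper's argument: the paper likewise notes that the lemma follows directly from Lemma~\ref{lem:loss-var}, the only content being that the time-invariant regularizer $\RCal(\bm{x})$ cancels in the difference. Your write-up simply makes this cancellation explicit.
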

The lemma directly follows from Lemma~\ref{lem:loss-var}. Collecting all the results, we can now establish a regret bound of stochastic online proximal gradient descent.

\begin{lemma}[Difference of Successive Optimal Values]\label{lem:prox-sol-var}
    For $t=1,\ldots,T$. Under Assumptions~\ref{assum:shapiro},~\ref{assum:drift}, ~\ref{assum:Lip-w}--\ref{assum:proxPL}, we have
\begin{equation*}
    \GCal_t^* - \GCal_{t+1}^* \le K\eta_t + \frac{J(\eta_t)^2}{2\mu}.
\end{equation*}
\end{lemma}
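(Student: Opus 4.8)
The plan is to follow the same two-term decomposition used in the proof of Lemma~\ref{lem:sol-var}. Since $\bm{x}_t^*$ minimizes $\GCal_t$ we have $\GCal_t^* = \GCal_t(\bm{x}_t^*)$, so I would write
\[
\GCal_t^* - \GCal_{t+1}^* = \bigl(\GCal_t(\bm{x}_t^*) - \GCal_{t+1}(\bm{x}_t^*)\bigr) + \bigl(\GCal_{t+1}(\bm{x}_t^*) - \GCal_{t+1}^*\bigr).
\]
The first parenthesis compares the two successive objectives at a common point and is immediately bounded by $K\eta_t$ via Lemma~\ref{lem:prox-loss-var}. Everything therefore reduces to controlling the second parenthesis, i.e. the suboptimality of the previous minimizer $\bm{x}_t^*$ for the new objective $\GCal_{t+1}$.

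For this I would apply the proximal PL condition (Assumption~\ref{assum:proxPL}) to $\GCal_{t+1}$ at $\bm{x}_t^*$, giving $\mu\bigl(\GCal_{t+1}(\bm{x}_t^*) - \GCal_{t+1}^*\bigr) \le \tfrac12 \DCal_\RCal^{t+1}(\bm{x}_t^*,\beta)$, and then bound $\DCal_\RCal^{t+1}(\bm{x}_t^*,\beta)$ by $J(\eta_t)^2$. This is the nonsmooth counterpart of inequalities~\eqref{eq:xx1}--\eqref{eq:xx2}, and I expect it to be the main obstacle. In the smooth setting one exploits $\nabla\FCal_t(\bm{x}_t^*)=0$ to replace $\nabla\FCal_{t+1}(\bm{x}_t^*)$ by the gradient drift $\nabla\FCal_{t+1}(\bm{x}_t^*) - \nabla\FCal_t(\bm{x}_t^*)$; here, however, $\bm{x}_t^*$ is only stationary for $\GCal_t$, so $\nabla\FCal_t(\bm{x}_t^*)$ need not vanish and is merely balanced by a subgradient of $\RCal$.

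To overcome this I would first record the first-order optimality condition $-\nabla\FCal_t(\bm{x}_t^*)\in\partial\RCal(\bm{x}_t^*)$, so that convexity of $\RCal$ yields $\RCal(\bm{y}) - \RCal(\bm{x}_t^*) + \langle\nabla\FCal_t(\bm{x}_t^*),\bm{y} - \bm{x}_t^*\rangle \ge 0$ for every $\bm{y}$. Writing the inner objective in the definition of $\DCal_\RCal^{t+1}(\bm{x}_t^*,\beta)$ and adding and subtracting $\nabla\FCal_t(\bm{x}_t^*)$ in the linear term, this nonnegativity lets me lower bound that objective by $\tfrac{\beta}{2}\|\bm{y} - \bm{x}_t^*\|^2 + \langle \nabla\FCal_{t+1}(\bm{x}_t^*) - \nabla\FCal_t(\bm{x}_t^*),\,\bm{y} - \bm{x}_t^*\rangle$. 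Completing the square over $\bm{y}$ then gives $\min_{\bm{y}}(\cdots) \ge -\tfrac{1}{2\beta}\|\nabla\FCal_{t+1}(\bm{x}_t^*) - \nabla\FCal_t(\bm{x}_t^*)\|^2$, whence $\DCal_\RCal^{t+1}(\bm{x}_t^*,\beta) \le \|\nabla\FCal_{t+1}(\bm{x}_t^*) - \nabla\FCal_t(\bm{x}_t^*)\|^2 \le J(\eta_t)^2$ by Assumption~\ref{assum:partial-shift}, exactly as in~\eqref{eq:xx2}.

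Substituting back produces $\GCal_{t+1}(\bm{x}_t^*) - \GCal_{t+1}^* \le \tfrac{1}{2\mu}J(\eta_t)^2$, and adding the $K\eta_t$ bound on the first parenthesis delivers the stated inequality. The only point requiring care, as already in Lemma~\ref{lem:sol-var}, is that $\bm{x}_t^*$ should lie in the set $\XCal$ on which the proximal PL condition is assumed to hold so that the condition may be invoked at that point.
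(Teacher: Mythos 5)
Your proposal is correct and follows essentially the same route as the paper: the same two-term decomposition at $\bm{x}_t^*$, the same application of the proximal PL condition to $\GCal_{t+1}$ at $\bm{x}_t^*$, and the same use of the optimality condition $-\nabla\FCal_t(\bm{x}_t^*)\in\partial\RCal(\bm{x}_t^*)$ to reduce $\DCal_\RCal^{t+1}(\bm{x}_t^*,\beta)$ to the quadratic whose minimum is $-\tfrac{1}{2\beta}\|\nabla\FCal_{t+1}(\bm{x}_t^*)-\nabla\FCal_t(\bm{x}_t^*)\|^2$, bounded by $J(\eta_t)^2$ via Assumption~\ref{assum:partial-shift}. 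The paper phrases this as splitting the $\min$ into two $\min$'s and discarding the nonnegative one, while you lower-bound the inner objective pointwise via the subgradient inequality; these are the same argument, and your closing caveat about $\bm{x}_t^*$ lying in $\XCal$ is a point the paper leaves implicit as well.
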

\begin{proof}
    
Note that
\begin{align}
\GCal_{t+1}(\bm{x}_t^*) - \GCal_{t+1}^* &\le \frac{1}{2\mu}\DCal_\RCal^{t+1} (\bm{x}_t^*,\beta) \nonumber\\
    &= -\frac{\beta}{\mu}\cdot \min_{\bm y} \left\{ \langle \nabla \FCal_{t+1}(\bm{x}_t^*),\bm{y} - \bm{x}_t^* \rangle + \frac{\beta}{2}\|\bm{y} - \bm{x}_t^*\|^2 + \RCal(\bm{y}) - \RCal(\bm{x}_t^*)\right\} \nonumber\\
    &\le -\frac{\beta}{\mu} \Bigg( \min_{\bm y} \left\{ \langle \nabla \FCal_{t+1}(\bm{x}_t^*) - \nabla \FCal_t(\bm{x}_t^*),\bm{y} - \bm{x}_t^*\rangle  + \frac{\beta}{2}\|\bm{y} - \bm{x}_t^*\|^2\right\} \nonumber\\
    &\quad+\min_{\bm y} \left\{ \langle \nabla\FCal_t(\bm{x}_t^*),\bm{y} - \bm{x}_t^*\rangle + \RCal(\bm{y}) - \RCal(\bm{x}_t^*) \right\} \Bigg) \nonumber\\
    &\le -\frac{\beta}{\mu}\cdot \min_{\bm y} \left\{ \langle \nabla \FCal_{t+1}(\bm{x}_t^*) - \nabla \FCal_t(\bm{x}_t^*),\bm{y} - \bm{x}_t^*\rangle  + \frac{\beta}{2}\|\bm{y} - \bm{x}_t^*\|^2 \right\}. \label{eq:yy2}
\end{align}
The last inequality follows from the optimality of $\bm{x}_t^*$. Also, we can easily obtain the global optimum $\bm{y} = \bm{x}_t^* + \frac{1}{\beta}(\nabla \FCal_t(\bm{x}_t^*) - \nabla \FCal_{t+1}(\bm{x}_t^*))$ for the minimization problem in the last inequality. Plugging this back into \eqref{eq:yy2} and using the argument in \eqref{eq:xx2}, for all $\bm{x}\in\R^{n_x}$, we obtain
\begin{align*}
\GCal_{t+1}(\bm{x}_t^*) - \GCal_{t+1}^* &\le \frac{1}{2\mu}\|\nabla \FCal_{t+1}(\bm{x}_t^*) - \nabla \FCal_t(\bm{x}_t^*)\|^2 \\
      &=\frac{1}{2\mu}
     \| \nabla \left(\E_{\bm{w}\sim\PP_{t+1}}[\LCal(\bm{x},\bm{w})]\right) - \nabla \left(\E_{\bm{w}\sim\PP_t}[\LCal(\bm{x},\bm{w})]\right) \|^2 \\
     &= \frac{1}{2\mu}\|  \E_{\bm{w}\sim\PP_{t+1}}\left[\nabla_{\bm x} \LCal(\bm{x},\bm{w}) \right] - \E_{\bm{w}\sim\PP_t} \left[\nabla_{\bm x} \LCal(\bm{x},\bm{w}) \right] \|^2 \\
     &\le \frac{J(\eta_t)^2}{2\mu}.
\end{align*}
Hence, using triangle inequality and result in Lemma~\ref{lem:prox-loss-var}, we have
\begin{equation*}
    \GCal_t^* - \GCal_{t+1}^* = \GCal_t(\bm{x}_t^*) - \GCal_{t+1}(\bm{x}_t^*) + \GCal_{t+1}(\bm{x}_t^*) - \GCal_{t+1}(\bm{x}_{t+1}^*) \le K\eta_t + \frac{J(\eta_t)^2}{2\mu},
\end{equation*}
as desired.
\end{proof}

\begin{remark}
    From the proof of Lemma~\ref{lem:prox-sol-var}, we can also derive the distance between optimal sets at successive time steps. Specifically, let $\XCal_t^*$ be the set of minimizers of \eqref{eq:prob} at time $t$ for $t=1,\ldots,T$. Using the result of Lemma~\ref{lem:prox-qg} and the optimality of $\bm{x}_t^*$, we have
    \begin{align*}
    {\rm dist} (\XCal_t^*,\XCal_{t+1}^*)^2 
    &= \inf_{\bm{x}_t^*\in \XCal_t^*, \bm{x}_{t+1}^*\in \XCal_{t+1}^*}\| \bm{x}_t^* - \bm{x}_{t+1}^*\|^2 \le \| \bm{x}_t^* - {\rm proj}_{\XCal_{t+1}^*}(\bm{x}_t^*) \|^2 \le \frac{2}{\xi} (\GCal_{t+1}(\bm{x}_t^*) - \GCal_{t+1}^*)\le \frac{J(\eta_t)^2}{\xi\mu}.
\end{align*}
\end{remark}

Having the above set up, we can establish a regret bound of online stochastic proximal gradient descent similar to Theorem~\ref{thm:PL}. 
\begin{theorem}\label{thm:prox-PL}
Suppose that Assumptions~\ref{assum:shapiro}--\ref{assum:smooth},~\ref{assum:Lip-w}--\ref{assum:proxPL} hold. For any step size $\gamma_t\equiv \gamma \in (0,1/\beta)$, the regret can be upper bounded by
\begin{equation}\label{eq:proxPLregret}
    {\rm Regret}(T) \le \frac{1}{2\mu\gamma}(\GCal_1(\bm{x}_1) - \GCal_1^*) + \frac{K}{\mu\gamma}\sum_{t=1}^{T-1} \eta_t + \frac{1}{4\mu^2\gamma}\sum_{t=1}^{T-1} J(\eta_t)^2  +\frac{1}{4\mu}\sum_{t=1}^{T-1}\sigma_t^2.
\end{equation}
\end{theorem}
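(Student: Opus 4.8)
The plan is to follow the same architecture as the proof of Theorem~\ref{thm:PL}, since the two ``bridging'' lemmas, Lemma~\ref{lem:prox-loss-var} and Lemma~\ref{lem:prox-sol-var}, have already been arranged to mirror their smooth counterparts. The only genuinely new ingredient is a one-step descent inequality for the stochastic proximal-gradient update \eqref{eq:forw-grad}, expressed through the gradient-mapping quantity $\DCal_\RCal^t$ that appears in the proximal PL condition. Once such a per-step contraction is in hand, the telescoping and rearrangement can be carried out essentially verbatim as in Theorem~\ref{thm:PL}.

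First I would derive the descent. Applying the Descent Lemma~\ref{lem:des-lem} to the smooth part $\FCal_t$ (with $\bm{z}=\bm{x}_t$) and adding $\RCal(\bm{x}_{t+1})$ to both sides gives
\[
\GCal_t(\bm{x}_{t+1}) - \GCal_t(\bm{x}_t) \le \langle \nabla\FCal_t(\bm{x}_t), \bm{x}_{t+1}-\bm{x}_t\rangle + \tfrac{\beta}{2}\|\bm{x}_{t+1}-\bm{x}_t\|^2 + \RCal(\bm{x}_{t+1}) - \RCal(\bm{x}_t).
\]
Since $\gamma\le 1/\beta$, I would replace $\tfrac{\beta}{2}$ by $\tfrac{1}{2\gamma}$ and recognize the right-hand side, evaluated at the stochastic gradient $\widehat{\nabla}\FCal_t$, as exactly the value minimized in \eqref{eq:forw-grad}, i.e.\ $-\tfrac{\gamma}{2}\DCal_\RCal^t(\bm{x}_t,1/\gamma)$ computed with $\widehat{\nabla}\FCal_t$. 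A short computation (using first-order optimality of the inner minimizer together with convexity of $\RCal$) shows $\DCal_\RCal^t(\bm{x},\delta)$ is nondecreasing in $\delta$; since $1/\gamma\ge\beta$, the scale-$(1/\gamma)$ mapping dominates the scale-$\beta$ mapping, so Assumption~\ref{assum:proxPL} can later be invoked at scale $\beta$.

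I expect the handling of the stochasticity to be the crux. Unlike the smooth case, $\bm{x}_{t+1}$ depends on $\widehat{\nabla}\FCal_t$ \emph{nonlinearly} through the prox, so expectations cannot be distributed term by term. My plan is to write $\nabla\FCal_t(\bm{x}_t) = \widehat{\nabla}\FCal_t + (\nabla\FCal_t(\bm{x}_t)-\widehat{\nabla}\FCal_t)$, keep the first summand inside the stochastic gradient mapping, and absorb the cross term $\langle \nabla\FCal_t(\bm{x}_t)-\widehat{\nabla}\FCal_t,\,\bm{x}_{t+1}-\bm{x}_t\rangle$ by a Young's inequality against the available $\tfrac{1}{2\gamma}\|\bm{x}_{t+1}-\bm{x}_t\|^2$ (equivalently, by comparing the stochastic step to the deterministic one via nonexpansiveness of the prox). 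Taking the conditional expectation over $\{\bm{w}_i^t\}_{i=1}^m$, the zero-mean property annihilates the first-order error, convexity of the gradient mapping in its gradient argument (Jensen) keeps the leading term at the true gradient, and the variance bound $\sigma_t^2$ produces the residual. Invoking Assumption~\ref{assum:proxPL} to replace $\DCal_\RCal^t(\bm{x}_t,\beta)$ by $2\mu(\GCal_t(\bm{x}_t)-\GCal_t^*)$ then yields a one-step contraction of the form $\E[\GCal_t(\bm{x}_{t+1}) - \GCal_t^* \mid \bm{x}_t] \le (1-2\mu\gamma)\,\E[\GCal_t(\bm{x}_t)-\GCal_t^*\mid \bm{x}_t] + \tfrac{\gamma}{2}\sigma_t^2$, where the precise constant in the contraction factor depends on how tightly the cross term is retained, and $\gamma\in(0,1/\beta)$ keeps that factor in $(0,1)$.

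Finally I would bridge from time $t$ to $t+1$ exactly as in Theorem~\ref{thm:PL}: inserting $\GCal_{t+1}(\bm{x}_{t+1})-\GCal_{t+1}^* = (\GCal_{t+1}(\bm{x}_{t+1})-\GCal_t(\bm{x}_{t+1})) + (\GCal_t(\bm{x}_{t+1})-\GCal_t^*) + (\GCal_t^*-\GCal_{t+1}^*)$, bounding the first bracket by Lemma~\ref{lem:prox-loss-var} and the last by Lemma~\ref{lem:prox-sol-var}, and combining with the contraction above gives $\E[\GCal_{t+1}(\bm{x}_{t+1})-\GCal_{t+1}^*]\le (1-2\mu\gamma)\,\E[\GCal_t(\bm{x}_t)-\GCal_t^*] + 2K\eta_t + \tfrac{1}{2\mu}J(\eta_t)^2 + \tfrac{\gamma}{2}\sigma_t^2$. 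Summing over $t=1,\dots,T-1$, using $\sum_{t=1}^{T-1}\E[\GCal_t(\bm{x}_t)-\GCal_t^*]\le{\rm Regret}(T)$, and rearranging (dividing through by $2\mu\gamma$) then reproduces \eqref{eq:proxPLregret}. Everything after the descent step is bookkeeping identical to the smooth case.
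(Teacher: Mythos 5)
Your overall architecture matches the paper's exactly: descent lemma applied to $\FCal_t$ with the $\RCal$-differences appended, a one-step contraction of the form $\E[\GCal_t(\bm{x}_{t+1})-\GCal_t^*\mid\bm{x}_t]\le(1-2\mu\gamma)(\GCal_t(\bm{x}_t)-\GCal_t^*)+\tfrac{\gamma}{2}\sigma_t^2$, the four-term bridge using Lemmas~\ref{lem:prox-loss-var} and~\ref{lem:prox-sol-var}, and the telescoping followed by division by $2\mu\gamma$ (which is precisely where the $\tfrac{1}{4\mu}\sum_t\sigma_t^2$ term comes from). Your Jensen observation (convexity of $\DCal_\RCal^t$ in its gradient argument) and the monotonicity of $\DCal_\RCal^t(\bm{x},\cdot)$ in $\delta$ are both valid, and the latter is used, implicitly, by the paper as well. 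You also correctly located the crux: the nonlinear dependence of $\bm{x}_{t+1}$ on $\widehat{\nabla}\FCal_t$ through the prox.

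At that crux, however, two of the mechanisms you propose would not survive being written out. Once you identify $\langle\widehat{\nabla}\FCal_t,\bm{x}_{t+1}-\bm{x}_t\rangle+\tfrac{1}{2\gamma}\|\bm{x}_{t+1}-\bm{x}_t\|^2+\RCal(\bm{x}_{t+1})-\RCal(\bm{x}_t)$ with the minimum value of the stochastic subproblem, the quadratic $\tfrac{1}{2\gamma}\|\bm{x}_{t+1}-\bm{x}_t\|^2$ is no longer ``available'' to absorb the cross term $\langle\nabla\FCal_t(\bm{x}_t)-\widehat{\nabla}\FCal_t,\bm{x}_{t+1}-\bm{x}_t\rangle$ by Young; and if you spend part of it on Young first, $\bm{x}_{t+1}$ no longer minimizes the resulting subproblem, so the gradient-mapping identification breaks. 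Likewise, conditional expectation does not annihilate that cross term, because $\bm{x}_{t+1}$ is correlated with the gradient error. The paper's resolution is to introduce the \emph{deterministic} prox point $\bm{x}_{t+1}'={\rm prox}_{\gamma\RCal}(\bm{x}_t-\gamma\nabla\FCal_t(\bm{x}_t))$: the part built on $\bm{x}_{t+1}'$ is the gradient mapping at the \emph{true} gradient (so no Jensen step is needed), the residual cross term becomes $\langle\nabla\FCal_t(\bm{x}_t)-\widehat{\nabla}\FCal_t,\bm{x}_{t+1}-\bm{x}_{t+1}'\rangle$, and the $\tfrac{1}{2\gamma}$-strong convexity of the stochastic subproblem objective $H$ supplies an extra $-\tfrac{1}{2\gamma}\|\bm{x}_{t+1}-\bm{x}_{t+1}'\|^2$ of slack against which Young's inequality yields exactly $\tfrac{\gamma}{2}\sigma_t^2$. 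Your fallback via nonexpansiveness of the prox ($\|\bm{x}_{t+1}-\bm{x}_{t+1}'\|\le\gamma\|\nabla\FCal_t(\bm{x}_t)-\widehat{\nabla}\FCal_t\|$) does close the gap, but without the strong-convexity slack it gives $\gamma\sigma_t^2$ per step and hence $\tfrac{1}{2\mu}\sum_t\sigma_t^2$ rather than the stated $\tfrac{1}{4\mu}\sum_t\sigma_t^2$. So the plan is salvageable and close in spirit, but the stochastic-error step needs the paper's $\bm{x}_{t+1}'$-plus-strong-convexity device to reach the claimed constants.
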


\begin{proof}
Applying Assumption~\ref{assum:smooth} and using the result in Lemma~\ref{lem:des-lem}, we can write
    \begin{align}
        \GCal_t(\bm{x}_{t+1}) - \GCal_t(\bm{x}_t) &= \FCal_t(\bm{x}_{t+1}) - \FCal_t(\bm{x}_t) + \RCal(\bm{x}_{t+1}) - \RCal(\bm{x}_t) \nonumber\\
        &\le \langle \nabla \FCal_t(\bm{x}_t),\bm{x}_{t+1} -\bm{x}_t \rangle +\frac{\beta}{2}\|\bm{x}_{t+1} -\bm{x}_t\|^2 + \RCal(\bm{x}_{t+1}) - \RCal(\bm{x}_t). \label{eq:pfg0}
    \end{align}
        Since the update $\bm{x}_{t+1}$ is determined by the sampling data $\{\bm{w}_i^t\}_{i=1}^m$ and the previous update $\bm{x}_t$, taking expectation over \eqref{eq:pfg0} yields
    \begin{align}
&~\E_{\bm{w}_1^t,\ldots,\bm{w}_m^t}\left[\GCal_t(\bm{x}_{t+1}) - \GCal_t(\bm{x}_t)|\bm{x}_t\right] \nonumber\\
        &\le \underbrace{\left[ \langle \nabla \FCal_t(\bm{x}_t), \bm{x}_{t+1}' - \bm{x}_t \rangle +\frac{1}{2\gamma}\|\bm{x}_{t+1}' -\bm{x}_t\|^2 + \RCal(\bm{x}_{t+1}') - \RCal(\bm{x}_t)\right]}_{\rm (I)} \nonumber\\
        &+ \underbrace{\E_{\bm{w}_1^t,\ldots,\bm{w}_m^t}\Bigg[\langle \nabla \FCal_t(\bm{x}_t), \bm{x}_{t+1} - \bm{x}_{t+1}' \rangle +\frac{\beta}{2}\|\bm{x}_{t+1} -\bm{x}_t\|^2- \frac{1}{2\gamma}\|\bm{x}_{t+1}' -\bm{x}_t\|^2  + \RCal(\bm{x}_{t+1}) - \RCal(\bm{x}_{t+1}')\Bigg]}_{\rm (II)}, \label{eq:pfg1}
    \end{align}
    where
    \begin{equation*}
        \bm{x}_{t+1}' = \arg\min_{\bm z} \left\{ \langle \nabla \FCal_t(\bm{x}_t), \bm{z} - \bm{x}_t\rangle +\frac{1}{2\gamma} \|\bm{z} - \bm{x}_t \|^2 + \RCal(\bm{z}) - \RCal(\bm{x}_t)\right\}.
    \end{equation*}
    Under Assumption~\ref{assum:proxPL}, we can bound (I)
    \begin{equation}\label{eq:proxPL}
        \langle \nabla \FCal_t(\bm{x}_t), \bm{x}_{t+1}' - \bm{x}_t \rangle +\frac{1}{2\gamma}\|\bm{x}_{t+1}' -\bm{x}_t\|^2 + \RCal(\bm{x}_{t+1}') - \RCal(\bm{x}_t) = -\gamma\DCal_\RCal^t\left(\bm{x}_t,\frac{1}{\gamma}\right) \le -2\mu\gamma (\GCal_t(\bm{x}_t)-\GCal_t^*).
    \end{equation}
    Next, (II) can be written as
    \begin{align}
        & \E_{\bm{w}_1^t,\ldots,\bm{w}_m^t}\Bigg[ \left\langle \nabla \FCal_t(\bm{x}_t) - \widehat{\nabla}\FCal_t(\bm{x}_t;\bm{w}_1^t,\ldots,\bm{w}_m^t), \bm{x}_{t+1} - \bm{x}_{t+1}' \right\rangle\Bigg]  \nonumber\\
        & +\E_{\bm{w}_1^t,\ldots,\bm{w}_m^t}\Bigg[ \left\langle \widehat{\nabla}\FCal_t(\bm{x}_t;\bm{w}_1^t,\ldots,\bm{w}_m^t), \bm{x}_{t+1} - \bm{x}_{t+1}' \right\rangle+\frac{\beta}{2}\|\bm{x}_{t+1} -\bm{x}_t\|^2 + \RCal(\bm{x}_{t+1}) - \RCal(\bm{x}_{t+1}')- \frac{1}{2\gamma}\|\bm{x}_{t+1}' -\bm{x}_t\|^2 \Bigg]. \label{eq:pg2}
    \end{align}
    Recalling that the updating rule is given by
    \begin{align}
        \bm{x}_{t+1} = \arg\min_{\bm z} \left\{\left\langle \widehat{\nabla}\FCal_t(\bm{x}_t;\bm{w}_1^t,\ldots,\bm{w}_m^t), \bm{z} - \bm{x}_t \right\rangle + \frac{1}{2\gamma}\|\bm{z} - \bm{x}_t\|^2 + \RCal(\bm{z}) - \RCal(\bm{x}_t) \eqqcolon H(\bm{z})\right\}.\label{eq:pfg_up}
    \end{align}
    Given $\{\bm{w}_i^t\}_{i=1}^m$ and the assumption that $\RCal$ is convex, $H$ is strongly convex. Therefore, by the optimality of $\bm{x}_{t+1}$, we have
    \[
    H(\bm{x}_{t+1}') \ge H(\bm{x}_{t+1}) + \frac{1}{2\gamma} \|\bm{x}_{t+1}' - \bm{x}_{t+1} \|^2.
    \]
    That is,
    \begin{align}
        &\left\langle \widehat{\nabla}\FCal_t(\bm{x}_t;\bm{w}_1^t,\ldots,\bm{w}_m^t), \bm{x}_{t+1} - \bm{x}_{t+1}' \right\rangle + \frac{1}{2\gamma}\|\bm{x}_{t+1} -\bm{x}_t\|^2 \nonumber\\
        &+ \frac{1}{2\gamma}\|\bm{x}_{t+1}' - \bm{x}_{t+1}\|^2 + \RCal(\bm{x}_{t+1}) - \RCal(\bm{x}_{t+1}') - \frac{1}{2\gamma}\|\bm{x}_{t+1}' -\bm{x}_t\|^2 \le 0. \label{eq:pg_strcvx}
    \end{align}
    Therefore,
    \begin{align}
        &\E_{\bm{w}_1^t,\ldots,\bm{w}_m^t}\Bigg[ \left\langle \widehat{\nabla}\FCal_t(\bm{x}_t;\bm{w}_1^t,\ldots,\bm{w}_m^t), \bm{x}_{t+1} - \bm{x}_{t+1}' \right\rangle+\frac{\beta}{2}\|\bm{x}_{t+1} - \bm{x}_t\|^2 + \RCal(\bm{x}_{t+1}) - \RCal(\bm{x}_{t+1}') - \frac{1}{2\gamma}\|\bm{x}_{t+1}' -\bm{x}_t\|^2 \Bigg] \nonumber\\
        &\le \E_{\bm{w}_1^t,\ldots,\bm{w}_m^t}\Bigg[\frac{1}{2}\left(\beta - \frac{1}{\gamma}\right)\|\bm{x}_{t+1} -\bm{x}_t\|^2 - \frac{1}{2\gamma}\|\bm{x}_{t+1}' - \bm{x}_{t+1}\|^2 \Bigg]. \label{eq:pg3}
    \end{align}
    Putting \eqref{eq:pg3} back to \eqref{eq:pg2} and using Young's inequality~\cite[Proposition 2.7]{ABMY14}, (II) can be bounded by
    \begin{align}
        &~\E_{\bm{w}_1^t,\ldots,\bm{w}_m^t}\Bigg[ \left\langle \nabla \FCal_t(\bm{x}_t) - \widehat{\nabla}\FCal_t(\bm{x}_t;\bm{w}_1^t,\ldots,\bm{w}_m^t), \bm{x}_{t+1} - \bm{x}_{t+1}' \right\rangle + \frac{1}{2}\left(\beta - \frac{1}{\gamma}\right)\|\bm{x}_{t+1} -\bm{x}_t\|^2 - \frac{1}{2\gamma}\|\bm{x}_{t+1}' - \bm{x}_{t+1}\|^2\Bigg] \nonumber\\
        &\le \frac{\gamma}{2}\E_{\bm{w}_1^t,\ldots,\bm{w}_m^t}\Bigg[ \left\| \nabla \FCal_t(\bm{x}_t) - \widehat{\nabla}\FCal_t(\bm{x}_t;\bm{w}_1^t,\ldots,\bm{w}_m^t)\right\|^2\Bigg] + \E_{\bm{w}_1^t,\ldots,\bm{w}_m^t}\Bigg[ \frac{1}{2}\left(\beta - \frac{1}{\gamma}\right)\|\bm{x}_{t+1} -\bm{x}_t\|^2\Bigg] \nonumber\\
        &\le \frac{\gamma}{2}\sigma_t^2 + \E_{\bm{w}_1^t,\ldots,\bm{w}_m^t}\Bigg[ \frac{1}{2}\left(\beta - \frac{1}{\gamma}\right)\|\bm{x}_{t+1} -\bm{x}_t\|^2\Bigg].\label{eq:pg4} 
    \end{align}
Since $\gamma \le 1/ \beta$, putting \eqref{eq:proxPL} and \eqref{eq:pg4} into \eqref{eq:pfg1}, we have
\begin{align*}
&~\E_{\bm{w}_1^t,\ldots,\bm{w}_m^t}\left[\GCal_t(\bm{x}_{t+1}) - \GCal_t(\bm{x}_t)|\bm{x}_t\right] \le -2\mu\gamma \E_{\bm{w}_1^t,\ldots,\bm{w}_m^t}[\GCal_t(\bm{x}_t)-\GCal_t^*|\bm{x}_t] + \frac{\gamma}{2}\sigma_t^2.
    \end{align*}
Therefore, given $\bm{x}_t\in\R^{n_x}$,
    \begin{align}
        &~\E_{\bm{w}_1^t,\ldots,\bm{w}_m^t\sim\PP_t}[\GCal_{t+1}(\bm{x}_{t+1}) - \GCal_{t+1}^*|\bm{x}_t] \nonumber\\
        &=\E[(\GCal_{t+1}(\bm{x}_{t+1}) - \GCal_t(\bm{x}_{t+1})) + (\GCal_t(\bm{x}_{t+1}) - \GCal_t(\bm{x}_t)) + (\GCal_t(\bm{x}_t^*) - \GCal_t^*) + (\GCal_t^* - \GCal_{t+1}^*)] \nonumber\\
        &\le 2K\eta_t + \frac{J(\eta_t)^2}{2\mu} + (1 - 2\mu\gamma)(\GCal_t(\bm{x}_t) - \GCal_t^*) + \frac{\gamma}{2}\sigma_t^2. \nonumber
    \end{align}
Summing the terms up,
    \begin{align*}
        &~\sum_{t=1}^T \E_{\{\bm{w}_1^\tau,\ldots,\bm{w}_m^\tau\}_{\tau=1}^{t-1}}[\GCal_t(\bm{x}_t) - \GCal_t^*] \nonumber\\
        &= (\GCal_1(\bm{x}_1) - \GCal_1^*) + \sum_{t=1}^{T-1} \E_{\bm{w}_1^t,\ldots,\bm{w}_m^t\sim\PP_t}[\GCal_{t+1}(\bm{x}_{t+1}) - \GCal_{t+1}^*] \nonumber\\
        &\le (\GCal_1(\bm{x}_1) - \GCal_1^*) + 2K\sum_{t=1}^{T-1}\eta_t + \frac{1}{2\mu}\sum_{t=1}^{T-1} J(\eta_t)^2 + \sum_{t=1}^{T-1} \left(1-2\mu\gamma\right) \E[\GCal_t(\bm{x}_t) - \GCal_t^*]+\frac{\gamma}{2}\sum_{t=1}^{T-1}\sigma_t^2.
    \end{align*}
Rearranging the terms, the regret is upper bounded by
    \begin{align*}
        {\rm Regret}(T) &= \sum_{t=1}^T \E_{\bm{w}_1^t,\ldots,\bm{w}_m^t\sim\PP_t} \left[\GCal_t(\bm{x}_t) - \GCal_t^* \right] \nonumber\\
        &\le \frac{1}{2\mu\gamma}(\GCal_1(\bm{x}_1) - \GCal_1^*) + \frac{K}{\mu\gamma}\sum_{t=1}^{T-1} \eta_t + \frac{1}{4\mu^2\gamma}\sum_{t=1}^{T-1} J(\eta_t)^2  +\frac{1}{4\mu}\sum_{t=1}^{T-1}\sigma_t^2.
    \end{align*}

Theorem~\ref{thm:prox-PL} shows that the online stochastic proximal gradient descent method can achieve sublinear regret when the cumulative distribution drift $\sum_{t}\eta_t$, the cumulative squared drifts of expectation of gradients $\sum_t J(\eta_t)^2$ and the cumulative variance of the gradient approximation $\sum_t \sigma_t^2$ grow sublinearly. However, if the variance of the gradient approximation is constant throughout all $t$, sublinear regret bounds can no longer be achieved. This is due to the technical challenge caused by the nonsmoothness of the regularizer. Yet, in Section~\ref{sec:sim}, we will see numerical examples that a sublinear regret of online stochastic proximal gradient descent can be observed while the cumulative variance of gradient approximation grows linearly given a suitable step size.
\begin{remark}\label{rmk:prox-ss}
    Unlike Theorem~\ref{thm:PL}, the gradient error term $\sum_t\sigma_t$ shown in the right-hand side of the regret bound~\eqref{eq:proxPLregret} does not couple with any step size, implying that we cannot control the term using a suitable step size rule. In other words, if the gradient error does not diminish, Theorem~\ref{thm:prox-PL} cannot guarantee a sublinear regret bound of online stochastic proximal gradient descent. However, sublinear regret can still be observed empirically using a suitable step size rule; see Section~\ref{sec:sim}. This suggests that it is possible to achieve a tighter regret bound of online stochastic proximal gradient descent given some assumptions on the regularizer. We will leave this as a future work.
\end{remark}
\end{proof}

\section{Application to CVaR Statistical Learning}\label{sec:cvar}
Without assuming convexity, this framework can be applied to a broader class of loss functions. In this section, we show how time-varying CVaR learning problem benefits from the above setup. In the following, the notation might be slightly different from the above sections, which we will define in due course.

\subsection{CVaR Formulation and Preliminaries}\label{sec:cvar-prelim}
Consider a known parametric family of functions $\FCal \coloneqq \{\phi \colon \R^n \to \R|\phi(\cdot)\equiv f(\cdot,\bm{\theta}),\bm{\theta}\in\R^n \}$, called a hypothesis class. At each time $t=1,\ldots,T$, we collect samples $(\bm{x},y)\in\R^d\times\R$ from an unknown distribution $\PP_t$ on example space $\Omega_t$ and would like to find $\bm{\theta}_t^*\in\R^n$ that can best describe the relation between input $\bm{x}$ and output $y$. Specifically, we use a loss function $\ell\colon \R\times\R \to \R$ to measure the discrepancy between the quantity of an admissible predictor $f(\bm{x},\bm{\theta})$ and the output $y$ for each sample $(\bm{x},y)$, and minimize an expected loss
\begin{equation}\label{eq:loss}
    \inf_{\bm{\theta}\in \R^n} \E_{(\bm{x},y)\sim \PP_t}\{\ell(f(\bm{x},\bm{\theta}),y)\} .
\end{equation}
at each time step $t$. A fundamental issue about this formulation is that it is risk-neutral. In some applications, for example, making medical decisions and portfolio management, one of the objectives is to avoid worst-case scenarios, and therefore, a robust risk measure is of more interest. In view of this, one of the most popular risk measures in theory and practice is CVaR, which is defined as
\begin{equation*}
    {\rm CVaR}^\alpha(Z) \coloneqq \inf_{h \in \R} \left\{h + \frac{1}{\alpha}\E\{(Z - h)_+\}\right\}
\end{equation*}
at confidence level $\alpha \in (0,1]$ for an integrable random loss $Z$. Putting $Z= \ell(f(\bm{x},\bm{\theta}),y)$, we can reformulate problem \eqref{eq:loss} using CVaR measure over variables $(\bm{\theta},h)$ as
\begin{equation*}
    \inf_{(\bm{\theta},h)\in \R^n\times\R}  \E_{(\bm{x},y)\sim \PP_t}\left\{ h + \frac{1}{\alpha}(\ell(f(\bm{x},\bm{\theta}),y) - h)_+ \right\}.
\end{equation*}

Intuitively, ${\rm CVaR}^\alpha(Z)$ is the mean of the worst $\alpha \cdot 100\%$ of the values of $Z$. To see this, we define the Value-at-Risk (VaR) of $Z$ at level $\alpha\in(0,1]$, which is given by
\begin{equation*}
    {\rm VaR}^\alpha(Z) \coloneqq \inf \{z\in\R \colon \PCal^t(\{Z \le z\}) \ge 1 - \alpha\};
\end{equation*}
in other words, the VaR can be understood as the left-side $(1-\alpha)$-quantile of the distribution of $Z$~\cite{CBS+21}. The results in~\cite[Theorem 6.2]{SDR21} show that the CVaR of $Z$ at level $\alpha\in(0,1]$ is equivalent to an expectation conditioned on random variables greater than VaR; i.e.,
\begin{equation}
    {\rm CVaR}^\alpha(Z) = \E(Z|Z\ge {\rm VaR}^\alpha(Z)).
\end{equation}
Since $\PCal^t(Z > {\rm VaR}^\alpha(Z)) = \alpha$, one can deduce that $\PCal^t(Z > {\rm CVaR}^\alpha(Z)) < \alpha$.

\subsection{CVaR with Time-Varying Distribution}
Let $\alpha\in(0,1]$. Denote $\ell_\alpha\colon\R^n \times \R\times\Omega \to \R$ by
\begin{equation}\label{eq:ell-alpha}
    \ell_\alpha(\bm{\theta},h;\bm{x},y) \coloneqq h + \frac{1}{\alpha}(\ell(f(\bm{x},\bm{\theta}),y) - h)_+.
\end{equation}
Then, for $t=1,\ldots,T$, our goal is to solve
\begin{equation}\label{eq:cvar-prob}
\min_{(\bm{\theta},h)} L_\alpha^t(\bm{\theta},h)
\end{equation}
where $L_\alpha^t \colon \R^n\times \R \to \R$ is given by
\begin{equation}\label{eq:cvar-loss}
    L_\alpha^t(\bm{\theta},h) \coloneqq \E_{(\bm{x},y)\sim\PP_t}[\ell_\alpha(\bm{\theta},h;\bm{x},y)] = \E_{(\bm{x},y)\sim\PP_t}\left\{ h + \frac{1}{\alpha}(\ell(f(\bm{x},\bm{\theta}),y) - h)_+\right\}.
\end{equation}
For the sake of notational simplicity, we assume that every distribution $\PP_t$ shares the same support set $\Omega_t\equiv \Omega$ for $t=1,\ldots,T$.

\begin{assumption}\label{assum:exp-diff}
The following statements hold:
    \begin{itemize}
        \item[(a)] For each $\bm{\theta}\in \R^n$,$\ell(f(\bm{x},\cdot),y)$ is $C_{\bm \theta}(\bm{x},y)$-Lipschitz on a neighborhood $\bm{\theta}$ for $\PCal^t$-almost all $(\bm{x},y)$, where $\E_{\PCal^t}\{C_{\bm \theta}(\bm{x},y)\}<\infty$.
        \item[(b)] $\ell(f(\bm{x},\cdot),\cdot)$ is differentiable at $\bm{\theta}$ for $\PCal^t$-almost all $(\bm{x},y)$, and $\PCal^t(\ell(f(\bm{x},\bm{\theta}),y)=h)\equiv 0$ for all $(\bm{\theta},h)\in \R^n \times \R$.
    \end{itemize}
\end{assumption}
Under Assumption~\ref{assum:exp-diff}, differentiation may be interchanged with expectation for $L_\alpha^t$ \cite[Section 7.2.4]{SDR21}. Moreover, the function $L_\alpha^t$ is differentiable~\cite[Lemma 1]{BBB17} and the gradient representation for every $(\bm{\theta},h)\in\R^n \times \R$ is given by
\begin{align}\label{eq:cvar-grad}
    \nabla L_\alpha^t (\bm{\theta}, h) = 
    \begin{bmatrix}
    \frac{1}{\alpha}\E_{(\bm{x},y)\sim\PP_t}\{\bm{1}_{\ACal(\bm{\theta},h)}(\bm{x},y)\nabla_{\bm \theta} \ell(f(\bm{x},\bm{\theta}),y) \}\\
    -\frac{1}{\alpha}\PCal^t(\ACal(\bm{\theta},h))) + 1
    \end{bmatrix},
\end{align}
where the event-valued multifunction $\ACal \colon \R^n \times \R \rightrightarrows \Omega $ is defined as
\begin{equation}\label{eq:cvar-event}
    \ACal(\bm{\theta},h) \coloneqq \{(\bm{x},y)\in \Omega | \ell(f(\bm{x},\bm{\theta}),y) - h > 0\}
\end{equation}
for $(\bm{\theta},h) \in\R^n \times \R$. Also, we can employ stochastic online gradient descent to solve the sequence of optimization problems, where every gradient is well-defined almost surely. Specifically, at each time step $t$, we run one-step gradient descent
\begin{equation}\label{eq:cvar-update}
    (\bm{\theta}_{t+1},h_{t+1}) = (\bm{\theta}_t,h_t) - \gamma \widehat{\nabla} L_\alpha^t (\bm{\theta}_t,h_t;\bm{x}_1^t,\ldots,\bm{x}_m^t,y_1^t,\ldots,y_m^t)
\end{equation}
for $t = 1,\ldots,T-1$, where the gradient approximation is given by
\begin{equation*}
    \widehat{\nabla} L_\alpha^t (\bm{\theta},h;\bm{x}_1^t,\ldots,\bm{x}_m^t,y_1^t,\ldots,y_m^t) = 
    \begin{bmatrix}
    \frac{1}{\alpha} \cdot\frac{1}{m}\sum_{i=1}^m \{\bm{1}_{\ACal(\bm{\theta},h)}(\bm{x}_i^t,y_i^t)\nabla_{\bm \theta} \ell(f(\bm{x}_i^t,\bm{\theta}),y_i^t) \}\\
    -\frac{1}{\alpha}\cdot\frac{1}{m}\sum_{i=1}^m\bm{1}_{\ACal(\bm{\theta},h)}(\bm{x}_i^t,y_i^t) + 1
    \end{bmatrix}.
\end{equation*}
It can be seen that $\E[\widehat{\nabla} L_\alpha^t (\bm{\theta},h;\bm{x}_1^t,\ldots,\bm{x}_m^t,y_1^t,\ldots,y_m^t)] = \nabla L_\alpha^t (\bm{\theta}, h)$.

The recent results in \cite[Lemma 1]{Kalo20} show that if the loss satisfies the set-restricted PL inequality relative to the multifunction $\ACal$ (which will be defined in~\eqref{eq:set-PL}), then the objective function $L_\alpha^t$ satisfies the ordinary PL inequality for $t=1,\ldots,T$. While the PL condition in \cite{Kalo20} was proved over the subset $\Delta' \coloneqq \{(\bm{\theta},h) \colon \PCal^t(\ACal(\bm{\theta},h)) > \alpha + 2\alpha\mu(h_t^* - h)_+\}$, our discussion in Section~\ref{sec:cvar-prelim} shows that $\PCal^t(\ACal(\bm{\theta}_t^*,h_t^*))<\alpha$, implying that an optimum $(\bm{\theta}_t^*,h_t^*)$ does not lie in the subset $\Delta'$. Hence, in the next lemma, we propose a new subset $\Delta$ that $L_\alpha^t$ satisfies the PL condition of $L_\alpha^t$, for $t=1,\ldots,T$, which is much more useful in studying the convergence around an optimum point.

\begin{lemma}[$L_\alpha^t$ is Polyak-Lojasiewicz]\label{lem:CVaR-PL}
    Fix an $\alpha \in (0,1]$. Suppose that for $t=1,\ldots,T$, the following holds:
    \begin{itemize}
        \item[(i)] $\arg\min_{(\bm{\theta},h)\in \R^n \times \R} L_\alpha^t (\bm{\theta},h)\neq \emptyset$ and denote $(\bm{\theta}_t^*,h_t^*)\in\arg\min_{(\bm{\theta},h)}L_\alpha^t(\bm{\theta},h)$;
        \item[(ii)] Let
        \begin{equation}\label{eq:cvar-region}
            \Delta_t \coloneqq \{(\bm{\theta},h)\colon\R^n\times\R \colon \lambda\alpha \le \PCal^t(\ACal(\bm{\theta},h)) \le \alpha + 2\alpha\mu(h_t^* - h)\}.
        \end{equation}
        The loss $\ell(f(\bm{x},\cdot),y)$ satisfies the $\ACal$-restricted PL inequality with parameter $\mu > 0$, relative to $\Omega$ and on $\Delta_t$; i.e.,
        \begin{equation}\label{eq:set-PL}
            \frac{1}{2}\|\E\{\nabla_{\bm \theta}\ell(f(\bm{x},\bm{\theta}),y)|\ACal(\bm{\theta},h)\} \|_2^2 \ge \mu \E \{ \ell(f(\bm{x},\bm{\theta}),y) - \ell^*(\bm{\theta},h)| \ACal(\bm{\theta},h)\}
        \end{equation}
        for all $(\bm{\theta},h)\in\Delta_t$, where $\ell^*(\bullet,\cdot) = \inf_{\tilde{\bm \theta}\in\R^n} \E\{\ell(f(\bm{x},\tilde{\bm \theta}),y)|\ACal(\bullet,\cdot)\}$.
    \end{itemize} 
    Suppose that there exists $0<\lambda<1$ such that for all $t=1,\ldots,T$, it holds that
    \[
    \PCal^t(\ACal(\bm{\theta}_t^*,h_t^*))\ge \lambda\alpha.
    \]
    Then, the CVaR objective $L_\alpha^t$ obeys
        \begin{equation*}            
        \kappa(L_\alpha^t(\bm{\theta},h) - L_\alpha^t(\bm{\theta}_t^*,h_t^*)) \le \frac{1}{2}\| \nabla L_\alpha^t(\bm{\theta},h)\|_2^2
        \end{equation*}
        everywhere on $\Delta_t$, where $\kappa=\lambda\mu$.
\end{lemma}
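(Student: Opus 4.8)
The plan is to exploit the block structure of the gradient \eqref{eq:cvar-grad}. Writing $P\coloneqq\PCal^t(\ACal(\bm{\theta},h))$, $\bm{v}\coloneqq\E\{\nabla_{\bm\theta}\ell(f(\bm{x},\bm{\theta}),y)\mid\ACal(\bm{\theta},h)\}$, and $\ell^*=\ell^*(\bm\theta,h)$, the two blocks are $\nabla_{\bm\theta}L_\alpha^t=\tfrac{P}{\alpha}\bm{v}$ and $\partial_hL_\alpha^t=1-\tfrac{P}{\alpha}$, so
\[
\tfrac12\|\nabla L_\alpha^t(\bm\theta,h)\|_2^2=\tfrac{P^2}{2\alpha^2}\|\bm{v}\|_2^2+\tfrac12\big(1-\tfrac{P}{\alpha}\big)^2 .
\]
Since $(\ell-h)_+=\bm{1}_{\ACal(\bm\theta,h)}(\ell-h)$, one gets $L_\alpha^t(\bm\theta,h)=h\big(1-\tfrac{P}{\alpha}\big)+\tfrac{P}{\alpha}\E\{\ell\mid\ACal(\bm\theta,h)\}$; setting $G\coloneqq\E\{\ell-\ell^*\mid\ACal(\bm\theta,h)\}\ge0$ and $\bar L\coloneqq h\big(1-\tfrac{P}{\alpha}\big)+\tfrac{P}{\alpha}\ell^*$ splits the optimality gap as
\[
L_\alpha^t(\bm\theta,h)-L_\alpha^t(\bm{\theta}_t^*,h_t^*)=\tfrac{P}{\alpha}G+\big(\bar L-L_\alpha^t(\bm{\theta}_t^*,h_t^*)\big).
\]
I would prove the two matching inequalities $\kappa\tfrac{P}{\alpha}G\le\tfrac12\|\nabla_{\bm\theta}L_\alpha^t\|_2^2$ and $\kappa\big(\bar L-L_\alpha^t(\bm{\theta}_t^*,h_t^*)\big)\le\tfrac12(1-\tfrac{P}{\alpha})^2$, then add them.

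The first inequality is the easy, $\bm\theta$-driven part, and is where \eqref{eq:set-PL} enters cleanly. Substituting $\tfrac12\|\bm{v}\|_2^2\ge\mu G$ into $\tfrac12\|\nabla_{\bm\theta}L_\alpha^t\|_2^2=\tfrac{P^2}{2\alpha^2}\|\bm{v}\|_2^2$ gives $\ge\tfrac{\mu P^2}{\alpha^2}G$, and the lower bound $P\ge\lambda\alpha$ built into $\Delta_t$ in \eqref{eq:cvar-region} yields $\tfrac{P^2}{\alpha^2}\ge\lambda\tfrac{P}{\alpha}$, hence $\ge\lambda\mu\tfrac{P}{\alpha}G=\kappa\tfrac{P}{\alpha}G$. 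This is exactly where the factor $\lambda$ (and therefore $\kappa=\lambda\mu$) appears, and it is the reason the constraint $\PCal^t(\ACal(\bm\theta_t^*,h_t^*))\ge\lambda\alpha$ is imposed so that the optimum itself lies in $\Delta_t$.

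The second (threshold) inequality is the crux. I would first strip off the inner minimization by a domination argument that does \emph{not} re-use the PL hypothesis: letting $\tilde{\bm\theta}$ attain $\ell^*=\E\{\ell(f(\bm{x},\tilde{\bm\theta}),y)\mid\ACal(\bm\theta,h)\}$, the conditional-mean optimality $\E\{\ell(f(\bm{x},\tilde{\bm\theta}),y)\mid\ACal\}\le\E\{\ell(f(\bm{x},\bm\theta_t^*),y)\mid\ACal\}$ together with the pointwise bound $\bm{1}_{\ACal(\bm\theta,h)}(\,\cdot\,-h)\le(\,\cdot\,-h)_+$ gives $\bar L\le L_\alpha^t(\bm{\theta}_t^*,h)$. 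Consequently $\bar L-L_\alpha^t(\bm{\theta}_t^*,h_t^*)\le L_\alpha^t(\bm{\theta}_t^*,h)-L_\alpha^t(\bm{\theta}_t^*,h_t^*)$, i.e.\ the suboptimality in the \emph{scalar} $h$ only of the convex map $h\mapsto L_\alpha^t(\bm{\theta}_t^*,h)$, whose minimizer is $h_t^*$. A first-order convexity estimate bounds this by a slope times $(h-h_t^*)$, and the upper constraint in \eqref{eq:cvar-region}, rearranged as $h-h_t^*\le\tfrac{1}{2\mu}(1-\tfrac{P}{\alpha})$, converts it into $\tfrac{1}{2\mu}(1-\tfrac{P}{\alpha})^2\le\tfrac{1}{2\kappa}(1-\tfrac{P}{\alpha})^2$, as required (using $\kappa=\lambda\mu\le\mu$).

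I expect the real difficulty to be precisely this last conversion, because the event $\ACal(\bm\theta,h)$ appearing in every gradient is threshold-dependent while the optimum is anchored at $h_t^*$, so the convexity slope $1-\PCal^t(\ell(f(\bm{x},\bm\theta_t^*),y)>h)/\alpha$ does not literally equal the current $1-\tfrac{P}{\alpha}$ that the region constraint and $\partial_hL_\alpha^t$ control. Reconciling these two probabilities, and tracking the sign of $1-\tfrac{P}{\alpha}$, is what demands care: when $P\le\alpha$ the multiplication by the nonnegative factor $1-\tfrac{P}{\alpha}$ preserves the inequality and the chain closes, but when $P>\alpha$ (which $\Delta_t$ permits for $h<h_t^*$) the factor is negative and the estimate must instead be routed through the variational/monotonicity characterization of ${\rm CVaR}^\alpha$ — comparing conditional means on events of probability at least $\alpha$ to obtain $\ell^*(\bm\theta,h)\le L_\alpha^t(\bm{\theta}_t^*,h_t^*)$ directly, so that $\bar L-L_\alpha^t(\bm{\theta}_t^*,h_t^*)$ is controlled without the $h$-block. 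Organizing this case split and checking that the auxiliary points remain where \eqref{eq:set-PL} is assumed is the part I anticipate to be the main obstacle.
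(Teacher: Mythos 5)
Your setup---the exact identity $L_\alpha^t(\bm{\theta},h)=h\bigl(1-\tfrac{P}{\alpha}\bigr)+\tfrac{P}{\alpha}\E\{\ell\mid\ACal(\bm{\theta},h)\}$, the split of the optimality gap into $\tfrac{P}{\alpha}G+\bigl(\bar L-L_\alpha^t(\bm{\theta}_t^*,h_t^*)\bigr)$, and the treatment of the gradient block via \eqref{eq:set-PL} together with the lower bound $P\ge\lambda\alpha$---is sound, and it is essentially the same mechanism the paper uses (the paper obtains the same two terms by a pointwise inequality on $\ell_\alpha$ followed by conditional expectation, and absorbs the $\bm{\theta}$-block with exactly your $\tfrac{P^2}{\alpha^2}\ge\lambda\tfrac{P}{\alpha}$ step). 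The genuine gap is that your second inequality, $\kappa\bigl(\bar L-L_\alpha^t(\bm{\theta}_t^*,h_t^*)\bigr)\le\tfrac12\bigl(1-\tfrac{P}{\alpha}\bigr)^2$, is never actually established: the proposal ends by enumerating the obstacles (the mismatch between $\PCal^t(\ACal(\bm{\theta}_t^*,h))$ and $P=\PCal^t(\ACal(\bm{\theta},h))$, the sign of $1-\tfrac{P}{\alpha}$, the unverified claim $\ell^*(\bm{\theta},h)\le L_\alpha^t(\bm{\theta}_t^*,h_t^*)$) rather than resolving them. These are not cosmetic: the region \eqref{eq:cvar-region} constrains $\PCal^t(\ACal(\bm{\theta},h))$, not $\PCal^t(\ACal(\bm{\theta}_t^*,h))$, so the convexity-slope estimate you propose produces a probability about which the hypotheses say nothing, and the CVaR-monotonicity fallback for $P>\alpha$ is only gestured at.

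The detour that creates this problem is the domination $\bar L\le L_\alpha^t(\bm{\theta}_t^*,h)$: once you pass to the scalar function $h\mapsto L_\alpha^t(\bm{\theta}_t^*,h)$ you lose the event $\ACal(\bm{\theta},h)$ on which every other quantity is conditioned. The paper avoids this entirely by keeping the pointwise inequality conditioned on the \emph{current} event throughout, so that the $h$-contribution to the gap comes out as $(h_t^*-h)\bigl(\tfrac{P}{\alpha}-1\bigr)$ with the same $P$ that appears in $\partial_h L_\alpha^t$ and in \eqref{eq:cvar-region}; the upper constraint $P\le\alpha+2\alpha\mu(h_t^*-h)$ is then invoked directly to absorb this term into $\bigl(1-\tfrac{P}{\alpha}\bigr)^2$. (Even that direct absorption is only immediate when $P\le\alpha$; the regime $\alpha<P\le\alpha+2\alpha\mu(h_t^*-h)$ requires exactly the care you flag, so your instinct about where the difficulty sits is correct---but flagging it is not closing it.) To complete your argument you would need either to relate $\PCal^t(\ACal(\bm{\theta}_t^*,h))$ to $P$, which the assumptions do not permit, or to drop the detour through $L_\alpha^t(\bm{\theta}_t^*,h)$ and bound $\bar L-L_\alpha^t(\bm{\theta}_t^*,h_t^*)$ with the term $(h_t^*-h)\bigl(\tfrac{P}{\alpha}-1\bigr)$ kept intact, as the paper does.
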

\begin{proof}
    Recall the definition of  $\ell_\alpha$ in \eqref{eq:ell-alpha}. Adapting the proof in \cite[Lemma 1]{Kalo20}, we have, for every $(\bm{x},y)\in\Omega$,
    \begin{align*}
        &~\ell_\alpha(\bm{\theta},h;\bm{x},y) - \ell_\alpha(\bm{\theta}_t^*,h_t^*;\bm{x},y)\\
        & = h - h_t^* + \frac{1}{\alpha}(\ell(f(\bm{x},{\bm \theta}),y) - h)_+  - \frac{1}{\alpha}(\ell(f(\bm{x},{\bm \theta}_t^*),y) - h_t^*)_+ \\
        &\le h - h_t^* + \frac{1}{\alpha}(\ell(f(\bm{x},{\bm \theta}),y) - h)_+ - \frac{1}{\alpha}\bm{1}_{\ACal(\bm{\theta},h)}(\bm{x},y)(\ell(f(\bm{x},{\bm \theta}_t^*),y) - h_t^*)\\
        &=  h - h_t^* + \frac{1}{\alpha}\bm{1}_{\ACal(\bm{\theta},h)}(\bm{x},y)(\ell(f(\bm{x},{\bm \theta}),y) - \ell(f(\bm{x},{\bm \theta}_t^*),y) + h_t^* - h )\\
        &= (h_t^* - h)\left(\frac{1}{\alpha}\bm{1}_{\ACal(\bm{\theta},h)}(\bm{x},y) - 1\right) + \frac{1}{\alpha}\bm{1}_{\ACal(\bm{\theta},h)}(\bm{x},y)(\ell(f(\bm{x},{\bm \theta}),y) - \ell(f(\bm{x},{\bm \theta}_t^*),y)).
    \end{align*}
    Taking expectation on both sides, it follows that
    \begin{align*}
    &~L_\alpha^t(\bm{\theta},h) - L_\alpha^t(\bm{\theta}_t^*,h_t^*)\\
        &\le (h_t^* - h)\left(\frac{1}{\alpha}\PCal^t(\ACal(\bm{\theta},h)) - 1\right) + \frac{1}{\alpha}\E_{(\bm{x},y)\sim\PP_t}\left\{\bm{1}_{\ACal(\bm{\theta},h)}(\bm{x},y)(\ell(f(\bm{x},{\bm \theta}),y)  - \ell(f(\bm{x},{\bm \theta}_t^*),y))\right\}\\
        &=  (h_t^* - h)\left(\frac{1}{\alpha}\PCal^t(\ACal(\bm{\theta},h)) - 1\right) +\frac{1}{\alpha}\E_{(\bm{x},y)\sim\PP_t}\left\{(\ell(f(\bm{x},{\bm \theta}),y) - \ell(f(\bm{x},{\bm \theta}_t^*),y)) | \ACal(\bm{\theta},h) \right\}\PCal^t(\ACal(\bm{\theta},h)) \\
        &= (h_t^* - h)\left(\frac{1}{\alpha}\PCal^t(\ACal(\bm{\theta},h)) - 1\right)+ \frac{1}{\alpha}\left(\E_{(\bm{x},y)\sim\PP_t}\{(\ell(f(\bm{x},{\bm \theta}),y) | \ACal(\bm{\theta},h)\} - \E\{\ell(f(\bm{x},{\bm \theta}_t^*),y))| \ACal(\bm{\theta},h)\} \right)\PCal^t(\ACal(\bm{\theta},h)) \\
        &\le (h_t^* - h)\left(\frac{1}{\alpha}\PCal^t(\ACal(\bm{\theta},h)) - 1\right) + \frac{1}{\alpha}\left(\E_{(\bm{x},y)\sim\PP_t}\{(\ell(f(\bm{x},{\bm \theta}),y) | \ACal(\bm{\theta},h)\} -\ell_t^*(\bm{\theta},h)\right)\PCal^t(\ACal(\bm{\theta},h))\\
        &= (h_t^* - h)\left(\frac{1}{\alpha}\PCal^t(\ACal(\bm{\theta},h)) - 1\right) + \frac{1}{\alpha}\left(\E_{(\bm{x},y)\sim\PP_t}\{(\ell(f(\bm{x},{\bm \theta}),y) - \ell_t^*(\bm{\theta},h)| \ACal(\bm{\theta},h)\}\right)\PCal^t(\ACal(\bm{\theta},h)).
    \end{align*}
    Therefore, from the set-restricted PL inequality~\eqref{eq:set-PL}, we get
    \begin{align}
        L_\alpha^t(\bm{\theta},h) - L_\alpha^t(\bm{\theta}_t^*,h_t^*) &\le (h_t^* - h)\left(\frac{1}{\alpha}\PCal^t(\ACal(\bm{\theta},h)) - 1\right) + \frac{1}{2\mu\alpha}\|\E\{\nabla_{\bm \theta} \ell(f(\bm{x},\bm{\theta}),y)|\ACal(\bm{\theta},h)\}\|^2\PCal^t(\ACal(\bm{\theta},h)). \nonumber
    \end{align}
    Now, recall the gradient of $L_\alpha^t$ given in \eqref{eq:cvar-grad}. Using the fact that $\PCal^t(\ACal(\bm{\theta}_t^*,h_t^*))<\alpha$ and the definition of $\Delta$, we have
    \[
    \lambda\mu(h_t^* - h)\left(\frac{1}{\alpha}\PCal^t(\ACal(\bm{\theta},h)) - 1\right) \le \left(1 - \frac{1}{\alpha}\PCal^t(\ACal(\bm{\theta},h))\right)^2.
    \]
    The lemma then follows from simple computation.    
\end{proof}

Although set-restricted PL inequality is a new notion in the literature, it is shown that if the loss $\ell(f(\bm{x},\bm{\theta}),y)$ is smooth and strongly convex for $\PCal^t$-almost all $(\bm{x},y)$, then for all events $\BCal$ on the support set, every pair of $(\bm{\theta},\BCal)$ satisfies the set-restricted PL inequality~\cite[Proposition 1]{Kalo20}. Moreover, the next lemma shows some nice properties of $\ell_\alpha$.

\begin{lemma}[Properties of $\ell_\alpha$]\label{lem:ell_alpha}
Fix $\alpha\in(0,1]$. Suppose that
\begin{itemize}
    \item[(i)] Assumption~\ref{assum:exp-diff} holds; and
    \item[(ii)] $\ell(f(\bm{x},\bm{\theta}),y)$ is $K$-Lipschitz continuous wrt $(\bm{x},y)$.
\end{itemize}
Then, the following statements hold:
    \begin{itemize}
        \item[(a)] Given any $(\bm{\theta},h)$, $\ell_\alpha(\bm{\theta}_1,h_1;\bm{x},y)$ is differentiable at $(\bm{\theta},h)$ for almost every $(\bm{x},y)\in \Omega$;
        \item[(b)] $\ell_\alpha(\bm{\theta}_1,h_1;\bm{x},y)$ is locally Lipschitz wrt $(\bm{\theta},h)$;
        \item[(c)] $\ell_\alpha(\bm{\theta}_1,h_1;\bm{x},y)$ is $K$-Lipschitz wrt $(\bm{x},y)$ on $\Omega$.
    \end{itemize}
\end{lemma}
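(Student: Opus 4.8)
The plan is to exploit the fact that, in the definition \eqref{eq:ell-alpha}, the variable $h$ enters affinely and the only source of nonsmoothness is the positive-part operator $(\cdot)_+ = \max\{\cdot,0\}$. Since $(\cdot)_+$ is globally $1$-Lipschitz and is differentiable everywhere except at the single point $0$, each of the three claims will follow by composing this elementary function with the regularity that Assumption~\ref{assum:exp-diff} and hypothesis (ii) grant to $\ell(f(\bm{x},\bm{\theta}),y)$. I would dispatch the Lipschitz claims (b) and (c) first, as they are direct, and then handle the differentiability claim (a), which is the delicate one.

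For claim (c), fix $(\bm{\theta},h)$ and note that the affine term $h$ cancels in the difference $\ell_\alpha(\bm{\theta},h;\bm{x}_1,y_1) - \ell_\alpha(\bm{\theta},h;\bm{x}_2,y_2)$. Applying the $1$-Lipschitzness of $(\cdot)_+$ reduces the bound to $\frac1\alpha|\ell(f(\bm{x}_1,\bm{\theta}),y_1) - \ell(f(\bm{x}_2,\bm{\theta}),y_2)|$, which hypothesis (ii) controls by $\frac{K}{\alpha}\|(\bm{x}_1,y_1)-(\bm{x}_2,y_2)\|$; this gives Lipschitzness in $(\bm{x},y)$ with the stated modulus (up to the factor $1/\alpha$ coming from the CVaR scaling). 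Claim (b) is analogous: for $(\bm{\theta}_1,h_1),(\bm{\theta}_2,h_2)$ in a fixed neighborhood, the triangle inequality together with the $1$-Lipschitzness of $(\cdot)_+$ bounds the increment by $|h_1-h_2| + \frac1\alpha\big(|\ell(f(\bm{x},\bm{\theta}_1),y)-\ell(f(\bm{x},\bm{\theta}_2),y)| + |h_1-h_2|\big)$, and Assumption~\ref{assum:exp-diff}(a) supplies the local Lipschitz constant $C_{\bm\theta}(\bm{x},y)$ for the $\bm{\theta}$-increment, yielding local Lipschitzness in $(\bm{\theta},h)$.

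For claim (a), the key observation is that $(z,h)\mapsto (z-h)_+$ is differentiable at every point with $z\ne h$, where locally it equals either $z-h$ or $0$. I would therefore split on the sign of $\ell(f(\bm{x},\bm{\theta}),y)-h$. By the local Lipschitzness from Assumption~\ref{assum:exp-diff}(a), $\ell(f(\bm{x},\cdot),\cdot)$ is continuous, so whenever $\ell(f(\bm{x},\bm{\theta}),y)\ne h$ strictly, this strict inequality persists on a neighborhood of $(\bm{\theta},h)$; in the active regime $\ell_\alpha$ coincides with $h+\frac1\alpha(\ell(f(\bm{x},\bm{\theta}),y)-h)$, differentiable at $(\bm{\theta},h)$ precisely because $\ell(f(\bm{x},\cdot),y)$ is differentiable there, while in the inactive regime $\ell_\alpha \equiv h$ is trivially differentiable. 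The only excluded configuration is the kink $\{\ell(f(\bm{x},\bm{\theta}),y)=h\}$, which by Assumption~\ref{assum:exp-diff}(b) carries zero $\PCal^t$-probability, together with the null set on which $\ell(f(\bm{x},\cdot),y)$ fails to be differentiable. Intersecting these two full-measure events gives differentiability of $\ell_\alpha$ at $(\bm{\theta},h)$ for $\PCal^t$-almost every $(\bm{x},y)$.

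The main obstacle, and the step I would write most carefully, is claim (a): it requires combining two distinct almost-everywhere statements from Assumption~\ref{assum:exp-diff} and verifying joint differentiability in $(\bm{\theta},h)$ rather than in each argument separately. The essential points are that continuity of $\ell$ makes the active and inactive regimes locally stable, so no kink is encountered in a neighborhood, and that the zero-probability of the equality event $\{\ell=h\}$ is exactly what removes the single nondifferentiable point of $(\cdot)_+$ on a set of full measure.
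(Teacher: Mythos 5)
Your proof is correct and follows essentially the same route as the paper's: the Lipschitz claims (b) and (c) reduce to the $1$-Lipschitzness of $(\cdot)_+$ (which the paper establishes by exactly the sign-splitting case analysis you package into that one fact), and the differentiability claim (a) rests on Assumption~\ref{assum:exp-diff}(b) removing the kink event $\{\ell(f(\bm{x},\bm{\theta}),y)=h\}$ on a set of full measure. If anything you are more careful than the paper --- you retain the $|h_1-h_2|$ term in (b), you spell out the locally-stable-regime argument for (a) which the paper merely asserts, and you correctly note that both arguments actually produce the constant $K/\alpha$ rather than the $K$ stated in the lemma.
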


\begin{proof}
Let us prove the statements one by one.
    \begin{itemize}
        \item[(a)] Differentiability of $\ell_\alpha(\bm{\theta}_1,h_1;\bm{x},y)$ at $(\bm{\theta},h)$ for almost every $(\bm{x},y)\in \Omega$ follows directly from Assumption~\ref{assum:exp-diff}. 
    \item[(b)] Suppose that $\ell(f(\bm{x},\bm{\theta}_1),y) - h_1=\epsilon_1$ and $\ell(f(\bm{x},\bm{\theta}_2),y) - h_2=\epsilon_2$. The statement follows directly when $\epsilon_1,\epsilon_2\ge0$ or $\epsilon_1,\epsilon_2<0$. Now, consider $\epsilon_1\ge0$ and $\epsilon_2<0$. Then,
    \begin{align*}
        |\ell_\alpha(\bm{\theta}_1,h_1;\bm{x},y) - \ell_\alpha(\bm{\theta}_2,h_2;\bm{x},y)|  &= \frac{1}{\alpha}|\ell(f(\bm{x},\bm{\theta}_1),y) - h_1| \\
        &= \frac{1}{\alpha} \epsilon_1\\
        &\le \frac{1}{\alpha}(\epsilon_1 - \epsilon_2)\\
        &= \frac{1}{\alpha}(\ell(f(\bm{x},\bm{\theta_1}),y) - \ell(f(\bm{x},\bm{\theta_2}),y)) + \frac{1}{\alpha}(h_2 - h_1).
    \end{align*}
    The local Lipschitzness of $\ell_\alpha$ wrt $(\bm{\theta},h)$ then follows from Assumption~\ref{assum:exp-diff}.
    \item[(c)] Following the trick in the above argument, suppose that $\ell(f(\bm{x}_1,\bm{\theta}),y_1) - h=\epsilon_1$ and $\ell(f(\bm{x}_2,\bm{\theta}),y_2) - h =\epsilon_2$. It remains to consider the case that $\epsilon_1>0$ and $\epsilon_2<0$. Then,
    \begin{align*}
        |\ell_\alpha(\bm{\theta},h;\bm{x}_1,y_1) - \ell_\alpha(\bm{\theta},h;\bm{x}_2,y_2)|  &= \frac{1}{\alpha}|\ell(f(\bm{x}_1,\bm{\theta}),y_1) - h| \\
        &= \frac{1}{\alpha} \epsilon_1\\
        &\le \frac{1}{\alpha}(\epsilon_1 - \epsilon_2)\\
        &= \frac{1}{\alpha}(\ell(f(\bm{x}_1,\bm{\theta}),y_1) - \ell(f(\bm{x}_2,\bm{\theta}),y_2)),
    \end{align*}
    which leads to the Lipschitzness result given assumption (ii).
    \end{itemize}
\end{proof}

Having the above lemmas, we are ready to apply our framework to the CVaR problem.
\begin{corollary}\label{cor:cvar}
Fix $\alpha\in(0,1]$. Under the setting of Lemma~\ref{lem:CVaR-PL}, suppose that
\begin{itemize}
    \item[(i)] assumptions (i) and (ii) in Lemma~\ref{lem:ell_alpha} hold;
    \item[(ii)] every underlying distribution has a bounded support set;
    \item[(iii)] the probability density function of every distribution is differentiable;
    \item[(iv)] the Wasserstein distance of any two successive distributions is bounded; i.e.,
    \[
    \mathfrak{M}(\PP_{t+1}, \PP_t) \le \eta_t,\quad{\rm for}~t=1,\ldots,T-1;
    \]
    \item[(v)] the variance of the gradient approximation is upper bounded by 
    \[
    \E[\|\widehat{\nabla} L_\alpha^t(\bm{\theta},h;\bm{x}_1^t,\ldots,\bm{x}_m^t,y_1^t,\ldots,y_m^t) - \nabla L_\alpha^t(\bm{\theta},h)\|^2] \le \sigma_t^2
    \]
    for some $\sigma_t>0$ and for $t=1,\ldots,T$; and
    \item[(vi)] $L_\alpha^t$ is $\beta$-smooth on $\Delta_t$ for $t=1,\ldots,T$.    
\end{itemize}
Suppose that the step size $\gamma_t\equiv \gamma\in(0,1/(2\kappa))$ for $t=1,\ldots,T$. If the iterates $(\bm{\theta}_t,h_t)\in \Delta_t$ over all $t=1,\ldots,T$, writing $\zeta = -\frac{\gamma^2\beta}{2} +\gamma$, a regret bound for stochastic online gradient descent satisfies
\begin{equation*}
    {\rm Regret}(T)\le \frac{1}{2\kappa\zeta}(L_\alpha^1(\bm{\theta}_1,h_1) - (L_\alpha^1)^*) + \frac{1}{\kappa\zeta} \left(K+\frac{C}{4\kappa}\right) \sum_{t=1}^T \eta_t + \frac{\gamma\beta}{2\kappa} \sum_{t=1}^{T-1} \sigma_t^2,
\end{equation*}
where $(L_\alpha^1)^* = \min_{(\bm{\theta},h)}L_\alpha^1(\bm{\theta},h)$ and $C>0$ is some constant that depends on the CVaR parameter $\alpha$, the loss function $\ell(f(\cdot,\cdot),\cdot)$, and the probability density functions of the underlying distributions $\{\PP_t\}_{t=1}^T$.
\end{corollary}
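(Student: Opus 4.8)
The plan is to cast Corollary~\ref{cor:cvar} as a direct instantiation of Theorem~\ref{thm:PL} applied to the CVaR loss $L_\alpha^t$, with the PL constant $\mu$ replaced by $\kappa=\lambda\mu$. The bulk of the work is to check that hypotheses (i)--(vi) of the corollary, together with the standing assumptions of Lemma~\ref{lem:CVaR-PL} and the containment $(\bm\theta_t,h_t)\in\Delta_t$, imply Assumptions~\ref{assum:shapiro}--\ref{assum:partial-shift} for $L_\alpha^t$; the one genuinely new estimate is the gradient-shift bound of Assumption~\ref{assum:partial-shift}, which must be shown to scale like $\sqrt{\eta_t}$ so that $J(\eta_t)^2\le C\eta_t$.

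First I would collect the routine verifications. Assumption~\ref{assum:shapiro} follows from Assumption~\ref{assum:exp-diff} together with Lemma~\ref{lem:ell_alpha}(a)--(b); Assumption~\ref{assum:bdd-support} is hypothesis (ii); the moment conditions on the gradient approximation are the unbiasedness noted after \eqref{eq:cvar-update} and the variance bound (v); Assumption~\ref{assum:drift} is hypothesis (iv); Assumption~\ref{assum:smooth} is hypothesis (vi), which suffices since the iterates remain in $\Delta_t$; Assumption~\ref{assum:PL} is exactly Lemma~\ref{lem:CVaR-PL} with constant $\kappa$ on $\Delta_t$; and Assumption~\ref{assum:Lip-w} is Lemma~\ref{lem:ell_alpha}(c) with constant $K$.

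The crux is Assumption~\ref{assum:partial-shift}. Recalling the gradient representation \eqref{eq:cvar-grad}, the shift $\|\E_{\PP_{t+1}}[\nabla L_\alpha^t(\bm\theta,h)]-\E_{\PP_t}[\nabla L_\alpha^t(\bm\theta,h)]\|$ splits into a $\bm\theta$-block $\tfrac1\alpha\bigl(\E_{\PP_{t+1}}-\E_{\PP_t}\bigr)\{\bm 1_{\ACal}\nabla_{\bm\theta}\ell\}$ and an $h$-block $\tfrac1\alpha\bigl(\PCal^{t+1}(\ACal)-\PCal^t(\ACal)\bigr)$. Because the integrand carries the indicator $\bm 1_{\ACal(\bm\theta,h)}$, which is discontinuous in $(\bm x,y)$, the Lipschitz route of Remark~\ref{rmk:partial-shift} (equation \eqref{eq:L_w}) fails --- this is precisely the example that remark anticipated. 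Instead I would smooth the indicator: fix a width $\delta>0$, replace $\bm 1_{\ACal}$ by a $\tfrac1\delta$-Lipschitz surrogate $\phi_\delta$ that agrees with $\bm 1_{\ACal}$ outside the collar $\{|\ell(f(\bm x,\bm\theta),y)-h|\le\delta\}$ of the boundary, and split each block as (difference of $\phi_\delta$-expectations) plus (collar error). Kantorovich--Rubinstein duality~\cite{thick19} bounds the first piece by $\tfrac1\delta\,\mathfrak{M}(\PP_{t+1},\PP_t)\le\eta_t/\delta$ (using, for the $\bm\theta$-block, the boundedness and regularity of $\nabla_{\bm\theta}\ell$ that follow from hypothesis (i) and the bounded support), while hypothesis (iii) --- differentiability, hence boundedness, of each density on the compact support --- bounds the collar mass under either distribution by a term linear in $\delta$, say $C'\delta$. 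Optimizing $\delta$ in $\eta_t/\delta+C'\delta$ gives a bound of order $\sqrt{\eta_t}$; setting $J(\eta_t)=\sqrt{C\eta_t}$ for a suitable $C>0$ determined by $\alpha$, $\ell$ and the density bounds then verifies \eqref{eq:J_w} and yields $J(\eta_t)^2\le C\eta_t$.

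With all assumptions in force I would invoke Theorem~\ref{thm:PL} verbatim, reading $\kappa$ for $\mu$, to obtain the four-term bound \eqref{eq:PLregret}. Substituting $J(\eta_t)^2\le C\eta_t$ then merges the two drift terms,
\[
\frac{K}{\kappa\zeta}\sum_{t}\eta_t+\frac{1}{4\kappa^2\zeta}\sum_{t}J(\eta_t)^2\le\frac{1}{\kappa\zeta}\Bigl(K+\frac{C}{4\kappa}\Bigr)\sum_{t}\eta_t,
\]
and bounding the partial sum by the full sum over $t=1,\ldots,T$ (legitimate since $\eta_t\ge0$) produces the stated regret bound. The main obstacle is the gradient-shift estimate: controlling the expectation of a discontinuous indicator across a Wasserstein ball, where the density regularity of hypothesis (iii) is what rescues the argument and forces the characteristic $\sqrt{\eta_t}$, rather than $\eta_t$, dependence.
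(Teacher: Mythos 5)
Your proposal is correct in outline and reaches the stated bound, but the one substantive step --- the gradient-shift estimate $J(\eta_t)\le C\sqrt{\eta_t}$ --- is handled by a genuinely different mechanism than in the paper. Both arguments reduce the $\bm\theta$-block and the $h$-block of $\nabla L_\alpha^t$ to controlling $|\PCal^{t+1}(\ACal(\bm\theta,h))-\PCal^t(\ACal(\bm\theta,h))|$ (using the boundedness of $\nabla_{\bm\theta}\ell$ on the compact support). The paper then invokes a quoted inequality of Chae--Walker \cite{CW20}, $\|p_t-p_{t+1}\|_1^2\le c\,(\|p_t\|_1+\|Dp_t\|_1+\|p_{t+1}\|_1+\|Dp_{t+1}\|_1)\,\Mmf(\PP_t,\PP_{t+1})$, which bounds the \emph{total variation} distance --- hence the probability gap for every event simultaneously --- by $C'\sqrt{\eta_t}$; this is where hypothesis (iii) on differentiable densities is actually consumed. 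You instead mollify the indicator $\bm 1_{\ACal}$, apply Kantorovich--Rubinstein to the Lipschitz surrogate, add a collar error, and optimize the width $\delta$; this is more elementary and self-contained, and it only needs control of the single event $\ACal(\bm\theta,h)$ rather than a uniform TV bound. The one soft spot is your collar estimate: boundedness of the joint density on a compact support (your reading of hypothesis (iii)) does not by itself give $\PCal^s(\{|\ell(f(\bm x,\bm\theta),y)-h|\le\delta\})=O(\delta)$, since the Lebesgue measure of the collar can be large where $\ell$ is nearly flat at level $h$; you need a non-degeneracy of $\ell$ on the boundary of $\ACal$, or --- more in the spirit of the stated hypotheses --- you can extract the collar bound from (vi): the $h$-component of $\nabla L_\alpha^t$ is $1-\tfrac1\alpha\PCal^t(\ACal(\bm\theta,h))$, so $\beta$-smoothness on $\Delta_t$ gives $\PCal^t(\{h<\ell\le h+\delta\})\le\alpha\beta\delta$ directly. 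With that attribution corrected, your optimization over $\delta$ yields the same $J(\eta_t)^2\le C\eta_t$, and the final merge of the two drift terms into $\frac{1}{\kappa\zeta}\bigl(K+\frac{C}{4\kappa}\bigr)\sum_t\eta_t$ matches the paper's conclusion exactly.
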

\begin{proof}
Let us verify that problem \eqref{eq:cvar-prob}
for $t=1,\ldots,T$ satisfies the assumptions in Theorem~\ref{thm:PL}. Using the results in Lemmas~\ref{lem:CVaR-PL} and~\ref{lem:ell_alpha}, it remains to show that
\[
\|  \E_{\bm{(\bm{x},\bm{y})}\sim\PP_{t+1}}[\nabla \ell_\alpha(\bm{\theta},h;\bm{x},y)] - \E_{\bm{(\bm{x},\bm{y})}\sim\PP_t}[\nabla \ell_\alpha(\bm{\theta},h;\bm{x},y)] \| \le C\sqrt{\eta_t}
\]
for some $C>0$. Recall that 
    \begin{equation*}
        \nabla_{(\bm{\theta},h)} \ell_\alpha(\bm{\theta},h;\bm{x},y)  = 
        \begin{bmatrix}
            \frac{1}{\alpha}\bm{1}_{\ACal(\bm{\theta},h)}(\bm{x},y)\nabla_{\bm \theta} \ell(f(\bm{x},\bm{\theta}),y) \\
    -\frac{1}{\alpha}\bm{1}_{\ACal(\bm{\theta},h)}(\bm{x},y) + 1
        \end{bmatrix}.
    \end{equation*}
    Given $(\bm{\theta},h) \in \R^n \times \R$, we see that $\nabla_{\bm \theta} \ell (f(\bm{x},\bm{\theta}),y)$ is bounded on the support set $\Omega$, due to the assumptions (v) and (vi). Assume that  $\|\nabla_{\bm \theta} \ell (f(\bm{x},\bm{\theta}),y)\| \le M$ for some $M>0$. Then,
    \begin{align*}
        &\left\| \E_{(\bm{x},y)\sim\PP_{t+1}} \left[\frac{1}{\alpha} \bm{1}_{\ACal(\bm{\theta},h)}(\bm{x},y) \nabla_{\bm \theta} \ell (f(\bm{x},\bm{\theta}),y)\right] - \E_{(\bm{x},y)\sim\PP_t}\left[\frac{1}{\alpha} \bm{1}_{\ACal(\bm{\theta},h)}(\bm{x},y) \nabla_{\bm \theta} \ell (f(\bm{x},\bm{\theta}),y)\right]\right\| \\
        \le& \frac{M}{\alpha}|\PCal^{t+1}(\ACal(\bm{\theta},h)) - \PCal^t(\ACal(\bm{\theta},h))|.
    \end{align*}
    Also,
    \begin{align*}
        \left| \E_{(\bm{x},y)\sim\PP_{t+1}} \left[ -\frac{1}{\alpha}\bm{1}_{\ACal(\bm{\theta},h)}(\bm{x},y) + 1 \right] - \E_{(\bm{x},y)\sim\PP_t} \left[ -\frac{1}{\alpha}\bm{1}_{\ACal(\bm{\theta},h)}(\bm{x},y) + 1 \right]\right| \le \frac{1}{\alpha} |\PCal^{t+1}(\ACal(\bm{\theta},h)) - \PCal^t(\ACal(\bm{\theta},h))|.
    \end{align*}
    It remains to bound $|\PCal^{t+1}(\ACal(\bm{\theta},h)) - \PCal^t(\ACal(\bm{\theta},h))|$. Now, let us invoke a theorem from \cite{CW20}.
    \begin{lemma}[c.f. {\cite[Theorem 2.1]{CW20}}]
        Let $p_t$ and $p_{t+1}$ be the probability density function of the distributions $\PP_t$ and $\PP_{t+1}$. Then,
        \begin{equation*}
            \|p_t - p_{t+1}\|_1^2 \le c (\|p_t\|_1 + \|D p_t\|_1 +\|p_{t+1}\|_1 + \|D p_{t+1}\|_1) \cdot \mathfrak{M}(\PP_t,\PP_{t+1})
        \end{equation*}
        for some constant $c>0$, where $D$ is the differential operator and $\|\cdot\|_1$ is the $\ell_1$-norm wrt the Lebesgue measure.
    \end{lemma}
    Let $\mathcal{E}$ be the event space. The theorem implies that the total variation distance $\sup_{\bm{A}\in \mathcal{E}} |\PCal^{t+1}(\bm{A}) - \PCal^t(\bm{A})|$ is upper bounded in terms of $\mathfrak{M}(\PP_t,\PP_{t+1})$, since
    \begin{align*}
         \sup_{\bm{A}\in \mathcal{E}} |\PCal^{t+1}(\bm{A}) - \PCal^t(\bm{A})|  &= \left| \int_{\bm A} p_t(\bm{x},y) d(\bm{x},y) - \int_{\bm A} p_{t+1}(\bm{x},y) d(\bm{x},y)\right| \\
         &\le \int_{\bm A} |p_t(\bm{x},y) - p_{t+1}(\bm{x},y)| d(\bm{x},y) \\
         &\le \int_{\Omega} |p_t(\bm{x},y) - p_{t+1}(\bm{x},y)| d(\bm{x},y)
          = \|p_t - p_{t+1}\|_1.
    \end{align*}
Consequently, applying Theorem~\ref{thm:PL} yields the desired result.
\end{proof}
Corollary~\ref{cor:cvar} shows that, under assumptions (i)--(vi) in Corollary~\ref{cor:cvar}, the regret of online stochastic gradient descent grows sublinearly when both the cumulative distribution drifts and the cumulative gradient noise variances grow sublinearly. In particular, the assumption on the smoothness of $L_\alpha$ is shown to be satisfied if the gradient on $(\bm{x},y)$ is not zero on the boundary of the event set~\cite[Section 2]{AMRU01}; for details on the assumption see~\cite[Theorem 2.1]{Ury95}. Although the conditions are described as general in~\cite{AMRU01,Ury00,YKRW11}, the conditions on the smoothness could be hard to verify. A number of works suggest smooth approximation of the CVaR problem; see, e.g.,~\cite{Kalo20,SY20}. Similar analysis could be applied but a cumulative approximation error term would be involved in the regret bound. 

\begin{remark}
    When a regularizer is added to the CVaR formulation, it is not clear whether \emph{set-restricted proximal PL inequality} (an analogy to proximal PL inequality) of $\ell + R$ would lead to the proximal PL condition of the regularized CVaR objective $L_\alpha^t + R$, for some regularizer $R$. The main technical difficulty lies in comparing the minimum values involved in the proximal PL inequality and the set-restricted proximal inequality when a regularizer exists. One may need to explore whether set-restricted proximal PL inequality is still a suitable tool to understand the proximal PL condition of the regularized CVaR learning problem. We will leave this as a future work.
\end{remark}

\section{Numerical Simulations}\label{sec:sim}
In this section, we present some numerical results to illustrate the theoretical findings of our proposed framework. Specifically, in the following, at every time step $t$ (for $t=1,\ldots,T$), we generate the set of data $\{(\bm{u}^{i,t},d_i^t)\}_{i=1}^m$, where
\[
d_i^t = \tilde{\bm{\theta}}_t^T \bm{u}^{i,t} + \nu_i^t.
\]
Here, $\bm{u}^{i,t}\sim\NCal(\bm{0},\bm{I})$ is a random vector with dimension $n=5$, where every entry follows an independent and identically distributed (iid) Gaussian distribution with zero mean; $\nu_i^t\sim\NCal(0,0.5)$ is some mean-zero measurement noise with variance 0.5; and $T=500$ is the horizon length of interest. For $t=1,\ldots,T-1$, $\tilde{\bm{\theta}}_t\in\R^n$ is deterministic, unknown and time-varying, which we initialize at $\tilde{\bm \theta}_1 = \bm{e}$ and update by
\begin{equation}\label{eq:sim-update}
    \tilde{\bm{\theta}}_{t+1} = {\rm proj}_C (\tilde{\bm{\theta}}_t + \bm{z}^t)
\end{equation}
with $\bm{e}\in\R^n$ being the all-one vector, $\bm{z}^t\sim\NCal(\bm{0},10^{-4}\cdot t^{-1}\bm{I})$ and some convex set $C\subseteq\R^n$ in the numerical simulations. We assess the performance of online stochastic gradient descent (resp. online stochastic proximal gradient descent) when the objective function is unconstrained (resp. constrained or regularized) via \emph{relative regret}, which is given by~\cite[Section IV]{CZP21}
\[
{\rm Relative~regret}(t) = \frac{1}{t}\cdot \frac{{\rm Regret}(t)}{{\rm Regret}(1)}.
\]
The relative regret shown in the figures are averaged over 100 Monte Carlo runs.

\subsection{Adaptive Filtering}
In this example, we are interested in solving the adaptive filtering problem, which can be posed as an online stochastic optimization problem with time-varying distributions~\cite{CZP21}:
\begin{equation}\label{eq:sim-lr}
    \inf_{\bm{\theta}\in\R^n} \E_{(\bm{u},d)\sim\PP_t}[(d - \bm{\theta}^T\bm{u})^2]+R(\bm{\theta})
\end{equation}
for $t=1,\ldots,T$. We consider three optimization problems corresponding to different regularizers and different feasible set $C$ defined in \eqref{eq:sim-update}: (i) an unconstrained optimization problem, where $C=\R^n$ and $R=0$; (ii) a constrained optimization problem, where $C=[-5,5]^n$ and $R(\cdot)=\bm{1}_C(\cdot)$ with $\bm{1}_C$ as the indicator function wrt $C$; and (iii) a regularized optimization problem, where $C=\R^n$ and $R(\cdot)=\|\cdot\|_1$. We apply online stochastic gradient descent for problem (i) and apply online stochastic proximal gradient descent for problems (ii) and (iii), all with initialization $\bm{\theta}_1 = \bm{0}$. We test the performance of both methods using two different step sizes: (a) a constant step size $\gamma_t = 0.01/\sqrt{T}$, and (b) a decaying step size $\gamma_t = 0.01/\sqrt{t}$ for $t=1,\ldots,T-1$. The number of samples drawn at each time step is $m=5$. When applying online stochastic proximal gradient descent, we use the fact that, for any $\bm{x}\in\R^n$, the proximal step for $R(\cdot) = \bm{1}_C(\cdot)$ is given by
\begin{equation*}
    ({\rm prox}_{\gamma_t R} (\bm{x}))_i =
    \begin{cases}
        -5,\quad& x_i>5\\
        x_i,\quad &x_i\in[-5,5]\\
        5,\quad &x_i<-5
    \end{cases}
\end{equation*}
while that for $R(\cdot) = \|\cdot\|_1$ is given by
\[
({\rm prox}_{\gamma_t R} (\bm{x}))_i = {\rm sgn}(x_i) \max\{|x_i| - \gamma_t,0\}
\]
for $i=1,\ldots,m$. We need to find an optimal point of each problem to compute a relative regret at each time step. For problems (i) and (ii), it is known that an optimal point at time $t$ is given by $\bm{\theta}_t^* = \tilde{\bm \theta}_t$~\cite{CZP21}. For problem (iii), we use the true vector $\tilde{\bm \theta}_t$ as the initial point and perform the proximal gradient descent updates using the constant step size 0.01 until either the difference of the objective values of successive iterates is less than $10^{-6}$ or the number of iterations reaches 1000. We then declare it as an optimal point $\bm{\theta}_t^*$.

\begin{figure}
\begin{subfigure}[b]{0.5\textwidth}
    \centering
    \includegraphics[width=\textwidth]{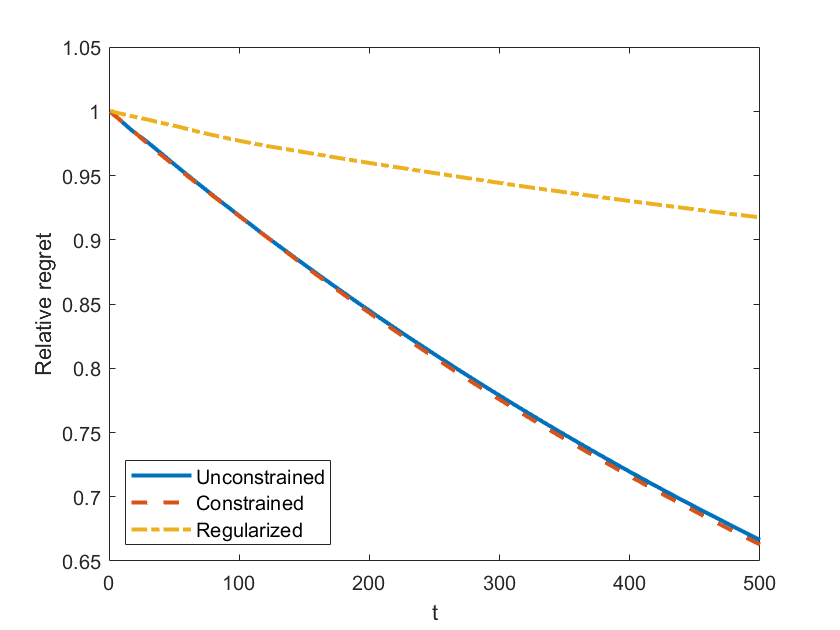}
    \caption{$\gamma_t = 0.01/\sqrt{T}$.}
    \label{fig:LR_sqrtT}
\end{subfigure}
\begin{subfigure}[b]{0.5\textwidth}
    \centering
    \includegraphics[width=\textwidth]{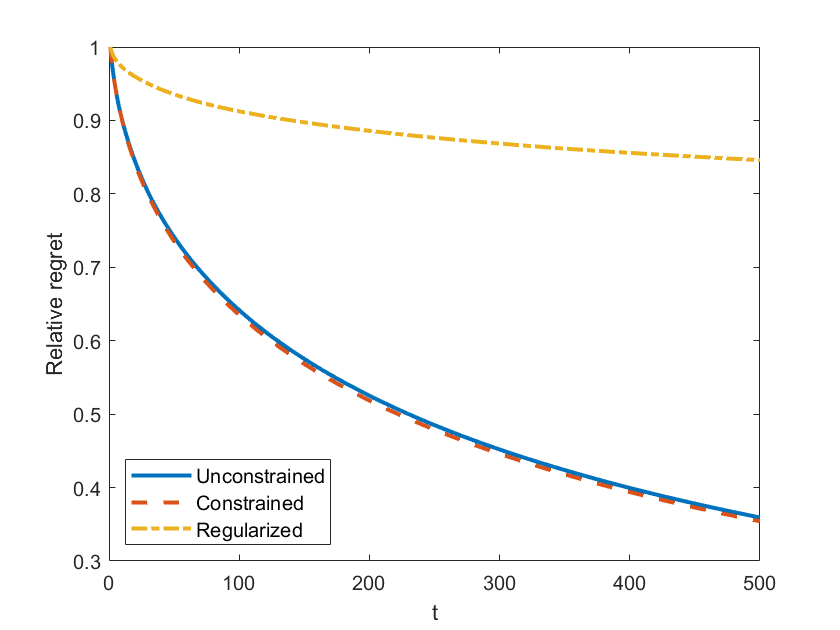}
    \caption{$\gamma_t = 0.01/\sqrt{t}$.}
    \label{fig:LR_sqrtt}
\end{subfigure}    
    \caption{Relative regret of online stochastic gradient descent and online stochastic proximal gradient descent when the adaptive filtering problem is unconstrained, constrained or regularized.}
    \label{fig:LR}
\end{figure}

Figure~\ref{fig:LR_sqrtT} shows the relative regret of online stochastic gradient descent and online stochastic proximal gradient descent with a constant step size $\gamma_t = 0.01/\sqrt{T}$ for all $t$ when the adaptive filtering problem is unconstrained and constrained/regularized, respectively. As can be seen, the relative regret of online stochastic gradient descent applying to the unconstrained problem decreases when $t$ increases, implying a sublinear regret of online stochastic gradient descent. This verifies our findings in Theorem~\ref{thm:PL}. Despite the fact that the cumulative variance of the measurement noise grows linearly, a sublinear regret of online stochastic gradient descent can be achieved given a suitable step size rule. Similar results can be observed for online stochastic gradient descent. Specifically, although Theorem~\ref{thm:prox-PL} cannot guarantee a sublinear regret bound of online stochastic proximal gradient descent as discussed in Remark~\ref{rmk:prox-ss}, we see that a sublinear regret bound can be achieved in numerical simulations when the adaptive filtering problem is either constrained or regularized.

Figure~\ref{fig:LR_sqrtt} shows the relative regret of online stochastic gradient descent and online stochastic proximal gradient descent with a decaying step size $\gamma_t = 0.01/\sqrt{t}$ for $t=1,\ldots,T$ when applied to the adaptive filtering problem with different regularizers. As can be seen, the online stochastic gradient descent (resp. online stochastic proximal gradient descent) achieves sublinear regret when the problem is unconstrained (resp. constrained or regularized). This verifies our discussion in Remark~\ref{rmk:stepsize} that the step size can be set to be decreasing instead of constant. Moreover, as the step size is larger at the beginning, the learning rate is faster than that using constant step size, resulting in a lower relative regret of both online stochastic gradient descent and online stochastic proximal gradient descent given different regularizers. Besides, using either step size, we see that the relative regret of online stochastic proximal gradient when applied to the regularized problem decreases at the slowest speed. This partly explains the technical difficulty in improving the regret bound proved in Theorem~\ref{thm:prox-PL} that the structure of the regularizer could seriously affect the performance of the online algorithms.

\subsection{CVaR Learning}
In this example, we consider the online CVaR learning problem with time-varying distribution:
\begin{equation}\label{eq:sim-cvar}
    \inf_{\bm{\theta},h} \E_{(\bm{u},d)\sim\PP_t} \left[h + \frac{1}{\alpha}((d - \bm{\theta}^T\bm{u})^2-h)_+\right]+R(\bm{\theta})
\end{equation}
with $\alpha = 0.95$. Using the same setting as in the previous example, we consider all unconstrained, constrained and regularized optimization problems of \eqref{eq:sim-cvar}. To better estimate the underlying probability distribution, we draw $m=20$ samples drawn at each time step. We apply online stochastic gradient descent for problem (i) and apply online stochastic proximal gradient descent for problems (ii) and (iii), all with initialization $(\bm{\theta}_1,h_1) = \bm{0}$. We test the performance of both methods with the following two step sizes: (a) a constant step size $\gamma_t = 0.01/\sqrt{T}$, and (b) a decaying step size $\gamma_t = 0.01/\sqrt{t}$ for $t=1,\ldots,T-1$. An optimal point for computing a relative regret is found as follows: At each time step, we approximate the distribution using a new sample set with 100 samples. Then, for all unconstrained, constrained and regularized versions of problem \eqref{eq:sim-cvar}, we initialize the iterate at the origin and perform the gradient descent (or proximal gradient descent) updates using the constant step size 0.01 until either the difference of the objective values of successive iterates is less than $0.01$ or the number of iterations reaches 1000. We then declare it as an optimal point $(\bm{\theta}_t^*,h_t^*)$.

\begin{figure}
\begin{subfigure}[b]{0.5\textwidth}
    \centering
    \includegraphics[width=\textwidth]{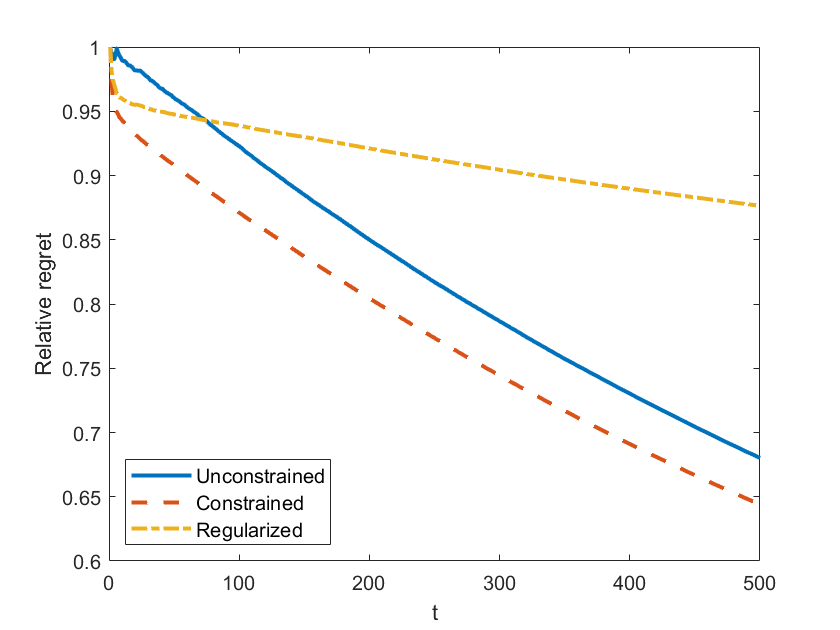}
    \caption{$\gamma_t=0.01\sqrt{T}$.}
    \label{fig:cvar_sqrtT}
\end{subfigure}
\begin{subfigure}[b]{0.5\textwidth}
    \centering
    \includegraphics[width=\textwidth]{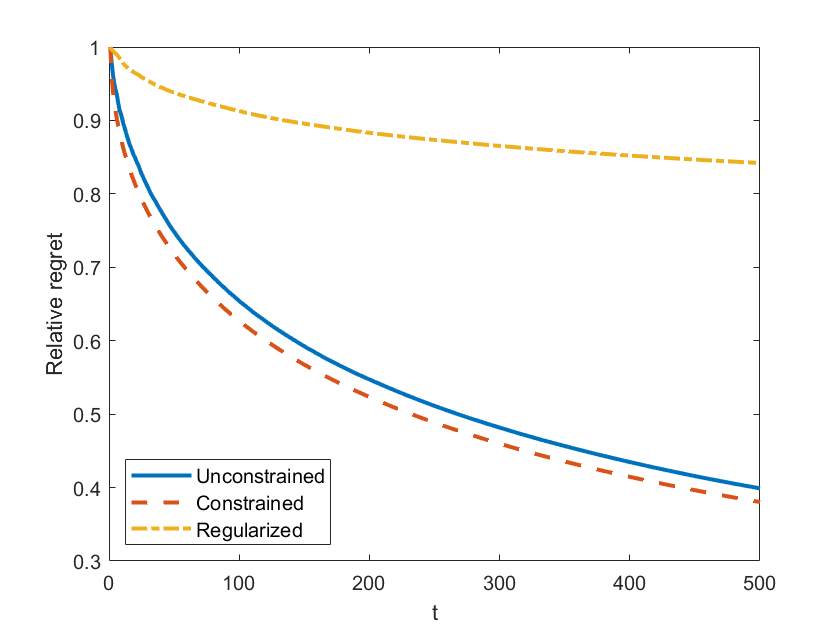}
    \caption{$\gamma_t=0.01\sqrt{t}$.}
    \label{fig:cvar_sqrtt}
\end{subfigure}    
    \caption{Relative regret of online stochastic gradient descent and online stochastic proximal gradient descent when the online CVaR learning problem is unconstrained, constrained or regularized.}
    \label{fig:cvar}
\end{figure}
Figure~\ref{fig:cvar_sqrtT} shows the relative regret of online stochastic gradient descent and online stochastic proximal gradient descent with a constant step size $\gamma_t = 0.01/\sqrt{T}$ for all $t$. It can be seen that both online stochastic gradient descent and online stochastic proximal gradient descent enjoy sublinear regret regardless of the regularizers. This matches our result in Corollary~\ref{cor:cvar} that online stochastic gradient descent achieves sublinear regret when applied to unconstrained online CVaR problem. Although it is not known whether a regularized CVaR learning problem possesses proximal PL condition, we see that online stochastic proximal gradient descent achieves sublinear regret when applied to constrained or regularized version of \eqref{eq:sim-cvar}. In particular, we see that the relative regret of online stochastic gradient descent when applied to the unconstrained problem and that of online stochastic proximal gradient descent when applied to the constrained problem decrease almost at the same rate, while the relative regret of online stochastic proximal gradient descent when applied to the regularized problem decreases at the slowest speed. This is because the $\ell_1$ regularizer destroys the smoothness of the problem, resulting in a slower convergence of the algorithm. On the other hand, the online stochastic proximal gradient descent performs better than the online stochastic gradient descent when the problem is constrained, because more knowledge on the underlying distribution is available compared with the unconstrained problem.

Figure~\ref{fig:cvar_sqrtt} shows the relative regrets of online stochastic gradient descent and online stochastic proximal gradient descent when applied to the unconstrained problem and constrained/regularized problem, respectively. Similar to Figure~\ref{fig:cvar_sqrtT}, all the curves are decreasing, implying sublinear regrets of both methods when applied to the corresponding problems. Also, we see that the relative regret of the online stochastic proximal gradient descent is the lowest, whereas that when applied to the regularized problem is the highest. Comparing to Figure~\ref{fig:cvar_sqrtT}, we see that both methods perform better using a decaying step size instead of a constant step size, because of the faster learning rate at the beginning.

\section{Conclusion}
In this paper, we considered an online stochastic optimization problem with a time-varying distribution, when the loss function satisfies the PL condition. We established a regret bound of online stochastic gradient descent, which is composed of the cumulative gradient biases caused by stochasticity and the cumulative Wasserstein distances between distribution at consecutive time steps. A similar regret bound of online stochastic proximal gradient descent was also shown when the objective function is regularized. We applied this framework to the CVaR learning problem by improving an existing proof of its PL condition and established its regret bound. Our numerical results support our theoretical findings and demonstrate the power of the framework. An interesting future direction is to apply the said framework to other data-driven modeling optimization problems with time-varying distribution. Particularly, it is intriguing to see under what condition the CVaR problem possesses proximal PL condition when it is regularized.

\bibliographystyle{plain}
\bibliography{reference}

\end{document}